\documentclass[12pt]{amsart}
\usepackage{latexsym,fancyhdr,amssymb,color,amsmath,amsthm,graphicx,listings,comment}
\usepackage[section]{placeins}
\newtheorem{thm}{Theorem} 
\newtheorem{dfn}{Definition} 
 
\newtheorem{lemma}{Lemma} 
\newtheorem{coro}{Corollary}

\let\paragraph\subsection
\title{Calculus on Wave Fronts}
\author{Oliver Knill}
\date{March 22, 2026}
\address{Department of Mathematics \\ Harvard University \\ Cambridge, MA, 02138 }
\subjclass{}

\begin{document}

\begin{abstract}
We define a deformation of the exterior derivative that
is a bounded operator and preserves the symmetries of the geometry. 
It satisfies a modified wave equation that honors the strong Huygens
principle in all dimensions. 
\end{abstract}
\maketitle

\section{Deformed exterior derivative}

\paragraph{}
Denote by $\mathcal{H}=\oplus_{k=1}^q \overline{\Lambda}^k(M)$ 
the Hilbert space of {\bf differential forms} on a Riemannian q-manifold $(M,g)$. 
It is the closure of the space of compactly supported 
differential forms $\Lambda(M)$ with respect to the norm defined by the inner product 
$\int_M \langle f,g \rangle_p dx$, where $dx$ is the volume $q$-form on $M$. 
(See section 11.3 in \cite{Cycon}).
With the {\bf exterior derivative} $d$ and its {\bf adjoint} $d^*$, 
define the {\bf Dirac operator} $D=d+d^*$ and the 
{\bf Hodge Laplacian} $L=D^2=d^* d + d d^*$. The wave equation $u_{tt} = - L u$
has the explicit solution $u(t,x) = \cos(D t) u(0,x) + t {\rm sinc}(D t) u_t(u,x)$ in 
any dimension. It defines a global Hamiltonian flow in $\mathcal{H} \times \mathcal{H}$. 
The wave equation is equivalent to two Schr\"odinger equations $u_t=\pm i D u$
with a 2-dimensional solution space $c_+ e^{i D t} u + c_- e^{-i D t} v$ in $\mathcal{H}$. 
\footnote{When restricting to functions (0-forms), pseudo differential operators are involved.
Extending the wave equation to all forms allows to avoid pseudo differential 
operators like $\sqrt{-\Delta}$. While pseudo differential operators are in general 
non-local, the expressions $\cos(Dt),{\rm sinc}(Dt)$ are both even and so functions
of $L$. The solution formula is the same than for a wave equation on a discrete geometry
like a finite abstract simplicial complex. }

\paragraph{}
We work here on Euclidean space $\mathbb{R}^q$. But all notions that can be pushed over via
coordinate changes hold also in general. In the case of a Riemannian manifold $M$, we would
use the {\bf exponential map} $\exp_p: T_pM \to M$ to transport the unit sphere
$S_1(p)= \{ v \in \mathbb{R}^q, |v|=1\} \subset T_pM$ to the {\bf wave front}
$W_h(p) = \exp_p(h S_1(p)) \subset M$. For $h$ smaller than the {\bf radius of injectivity},
the wave front is diffeomorphic to a $(q-1)$-sphere. We can look at partial differential 
equations like the wave equation also on Riemannian manifolds. We work here for simplicity 
on flat $M=\mathbb{R}^q$.

\paragraph{}
The mathematics of wave fronts links differential geometry 
with partial differential equations. \cite{BergerPanorama} outlines many of these relations. 
First investigations were done by Huygens who studied with waves and rays 
\cite{HuygensBarrowNewtonHooke}. The principle that in odd dimensions, the solution 
$u(t,p)$ of the wave equation $u_{tt}=-Lu$ only involves the initial data on $W_h(p)$ is 
called the {\bf strong Huygens principle}. It implies that there are {\bf sharp wave fronts}. 
In even dimensions, also points closer to $p$ do matter. Wave fronts are softer;
there are {\bf wakes}. The Kirchhoff solutions of the wave equation for scalar fields
can be found for example in section 2.4 in \cite{Evans2010}. 

\paragraph{}
Let $\phi_n(t)$ denote the unique solution of {\bf Bessel equation}
\begin{equation} f''(r) + (n-1) \frac{f'(r)}{r} + f(r)=0, f(0)=1,f'(0)=0 \; . \end{equation}
We have $\phi_1(r)=\cos(r), \phi_2(t)=J_0(r)$, $\phi_3(t)={\rm sinc}(r)=\sin(r)/r$,
$\phi_4(r)=2J_1(r)/r$. Historically, Euler was one of the first to look in 1764 at the $q=2$ 
wave equation $u_{tt} = u_{rr} + \frac{1}{r} u_r + \frac{1}{r^2} u_{\theta \theta}$ 
which with $u(t,r,\theta)=f(r) \cos(m t)$ leads to $f''+ f'/r+ m^2 f=0$ and which 
after a variable change $r \to mr$ becomes $g'' + g'/r + g=0$, the above Bessel equation
in the case $q=2$.  (See \cite{Watson1962} section 1.3). In an appendix, we say more about
Bessel. 

\paragraph{} The following definition is new: 

\begin{dfn}
For $t>0$ define the {\bf deformed exterior 
derivative} using the $\phi_{q+2}$ Bessel function:
\begin{equation} d_t = t \phi_{q+2}(t D) d  \; . \end{equation}
\end{dfn}

\paragraph{}
Because $\phi_{q+2}(0)= 1$ and $d_0=0$, 
it satisfies $\lim_{t \to 0} d_t/t = \lim_{t \to 0} \frac{d}{dt} d_t  =d$. 
The operator $d_t$ is a bounded operator on $\mathcal{H}$ because 
$r \to \phi_{q+2}(r) r$ is bounded. By the functional calculus for
the self-adjoint operator $tD$ (which is only densely defined), also 
$\phi_{q+2}(tD) tD$ is bounded so that $\phi_{q+2}(tD) t d$ is bounded. 
A densely defined bounded operator extends to the entire Hilbert space. 

\paragraph{}
For $q=1$, where $\phi_3(r)={\rm sinc}(r)=\sin(r)/r$ and 
$D=d+d^*=\left[ \begin{array}{cc} 0 & -d/dx \\ d/dx & 0 \end{array} \right]
 = i d/dx$, we have $t \; {\rm sinc}(D t) D u = \sin(Dt) u = [e^{i Dt} - e^{-i Dt}] u/(2i)
= [e^{-\frac{d}{dx} t} - e^{\frac{d}{dx} t}] u/(2i) = i [u(x+t)-u(x-t)]/2$, using the {\bf Taylor
theorem} $e^{\frac{d}{dx} t} u(x)=\sum_{n=0}^{\infty} \frac{d^n}{dx^n} \frac{u(x)}{n!}=u(x+t)$.
We see that on 0-forms $u$, its exterior derivative is the 1-form 
$d_t u(x) = [u(x+t) - u(x-t)]/2 dx$. On a circle $M$, the only 
compact 1-manifold, all $t$ rationally independent
of the length of $M$ have the same harmonic forms $d_t u =0$. The Betti numbers $b_k$
defined as the dimension of the kernel of $L_t=d_t^* d_t + d_t d_t^*$ restricted to $\mathcal{H}_k$ 
are for almost all $t$ the same as for the $L=d d^* + d^* d$. 

\paragraph{}
Our original goal (pursued sporadically over the last two decades) 
had been to define an exterior derivative in arbitrary dimensions that preserves 
symmetries and does not require smoothness. It is the quest to pursue 
{\bf multi-variable calculus without limits}. 
A concrete early question has been: is it true that if we take a 1-form
$f$ (vector field) on a 2-manifold and have all line integrals along all $p \in M$ and fixed $t>0$
the wave fronts $W_t(p)$ are all  zero for a fixed $t$, that $f$ has to have zero classical curl?
We would call the line integral $\int_{W_t(p)} f$ a {\bf discretized curl} as it does not mind $f$
to be only continuous or even piecewise continuous and allow $M$ to be piecewise smooth only, like a 
{\bf polyhedron} or have boundaries, where $W_t(p)$ is defined using the billiard flow. 
Green's theorem tells that the line integral along $W_t(p)$ is the integral of the curl $df$ 
over the ball $B_t(p)$ (for small $t$ at least). We can now answer this question and see that it 
depends on the spectrum of the form Laplacian on $M$ as well as on $t$. 

\paragraph{}
Any discretization in the form of a cell complex or triangulation 
would destroy rotational symmetry for example. Rotational symmetry is important in physics. 
The properties of the {\bf hydrogen operator} for example explains much of the periodic elements in 
chemistry, the properties of Schwarzschild or Kerr metric explain measurable consequences in the
general theory of relativity. In one dimensions, the scaled deformed derivative $d_t/t$ is the 
standard symmetric difference gradient. Since $|t \phi_{q+2}(t)| \sim t^{(1-q)/2}$ for $t \to \infty$,
we have $||d_t|| \sim t^{(1-q)/2}$ for $t \to \infty$. Only in dimension $1$, the exterior
derivative does not go to zero and stays bounded for $t \to \infty$. 

\section{Deformed Wave equation}

\paragraph{}

\begin{dfn}
Define the {\bf Bessel acceleration} 
\begin{equation}  D_{tt} u = u_{tt} + (q-1) [ \frac{u_t}{t} - \frac{u}{t^2} ] \; . \end{equation}
\end{dfn}
It needs that $u(0,x)=0$ to have a limit at $t \to 0$. 

\paragraph{}
It is a linear second order {\bf Sturm-Liouville operator} that is singular 
at $t=0$ and approaches $u_{tt}$ for larger $t$.  It factors as
$D_{tt}=(\partial_t+\frac{q}{t})(\partial_t-\frac{1}{t})$. It is a 
Helmholtz Laplacian with zero energy and angular momentum $1$.

\paragraph{}

\begin{dfn}
Define also the {\bf Bessel acceleration} 
\begin{equation}  d_{tt} u = u_{tt} + (q-1)  \frac{u_t}{t} \; . \end{equation}
\end{dfn}
It needs that $u_t(0,x)=0$ to have a limit at $t \to 0$.

\paragraph{}
The differential equation $D_{tt}=a$ has for $u(0)=0$ the solution family 
$u(t) = a t^2/(q+1) + t u'(0)$. This agrees with solutions of 
$\partial_t^2 u=0$ except that the acceleration
is for $q>1$ smaller than the traditional acceleration $u_{tt}$. 
The equation $D_{tt} f = g$ is solved for analytic 
$g=\sum_{n=1}^{\infty} c_n t^{n-1}$ by noting that $D_{tt} f = t^{n-1}$ has
the solution $-\frac{t (1-t^n)}{n (q + n)}$. If $g=\sum_{n=1}^{\infty} a_n t^{n-1}$ 
is bounded, there is exactly one bounded solution $f = \sum_{n=1}^{\infty} b_n t^n$.
The condition $f(0)=0$ is one of boundary condition, 
the boundedness forces the initial velocity and so fixes a second boundary condition.

\paragraph{} Also this definition of a partial differential equation appears to be new:

\begin{dfn}
Define the {\bf deformed wave equation}
\begin{equation} D_{tt} u + L u = 0 \; , \end{equation}
and
\begin{equation} d_{tt} u + L u = 0 \; , \end{equation}
\end{dfn}

\paragraph{}
For $q=1$, these are  both the usual wave equation. The motivation to define thes PDE's
was that we will see that $u(t,p) = d_t f(p) = t \phi_{q+2}(tD) df$ is an explicit solution to 
$(D_{tt} + L)u=0$  with $u(0,p)=0$ and $\lim_{t \to 0} \frac{d}{dt} d_t f(p) = df(p)$.
Also, $u(t,p) =\phi_q(tD) df(p)$ is a an explict solution of $(d_{tt} + L) u=0$ with 
$u(0,p)=df(p)$ and $u_t(0,p)=0$. 

\paragraph{}
In order that the solution of the deformed wave equation $(D_{tt} + L) u=0$ works we need 
in the case $q \neq 1$ that $u(t,x) = t^2 v(t,x)$ and especially $u(0,x)=0$. 
In order that the solution of the deformed wave equation $(d_{tt} + L) u =0$ makes sense,
we need $u_t(0,x)=0$. 
In all dimension $q$, the initial value problems with initial velocity or initial positions 
can be solved explicitly. The strong Huygens principle holds. 

\paragraph{}
Asymptotically, for $t \to \infty$, for any $q$, the solutions of the deformed wave equation 
move like the wave equation $u_{tt} + L u =0$. While the later has the explicit solution 
$u(t,p) = \cos(Dt) u(0,p) + {\rm sinc}(Dt) u'(0,p)$, this only honors the {\bf weak Huygens
property}: the classical wave equation features ``wakes" in even dimensions as the explicit
solution formulas of Kirchhoff show.  By modifying the time acceleration from $\partial_t^2$ to 
$D_{tt}$ slightly, the strong Huygens principle is achieved in arbitrary dimensions. This
can be useful when trying to interpret a wave front in a space-time 4-manifold  as ``space". 
\footnote{As for references, see e.g. \cite{Simon2017} 6.9.64 which 
uses the Fourier picture that works in arbitrary dimensions.
Or \cite{Zeldich2017} Chapter I, section 3, for wave equations 
on Riemannian manifolds.}

\section{Results}

\paragraph{}
Here are our main results about the bounded operator $d_t: \mathcal{H} \to \mathcal{H}$. 
Similarly as the classical exterior derivative $d$, also $d_t$ maps 
$k$-forms $\mathcal{H}_k$ to $(k+1)$-forms $\mathcal{H}_{k+1}$ but it is a bounded operator.
From $d^2=0$ follows immediately that also $d_h^2=0$. Harmonic forms stay harmonic. 

\begin{thm}[Wave equation]
a) The form $u(t,p)=d_t f(p)$ solves $(D_{tt} + L) =0$ 
with initial position $u(0,p)=0$ and initial velocity $\lim_{t \to 0} \frac{d}{dt} u_t(0,p) = df(p)$.\\
b) The form $u(t,p)=\phi_q(tD) df(p)$ solves $(d_{tt} + L)=0$ 
with initial position $u(0,p)=df(p)$ and initial velocity $u_t(0,p)=0$. 
\end{thm}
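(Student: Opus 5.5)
The plan is to reduce both statements to scalar ODE identities for the Bessel functions $\phi_n$ via the functional calculus of the self-adjoint operator $D$. Since $L=D^2$, and $d$ commutes with $L$ on a suitable dense domain (because $dL = d d^* d = L d$ on forms), we can work on the spectral representation of $D$. There $D$ acts as multiplication by a real $\lambda$. It then suffices to show that for each fixed $\lambda$ the scalar functions $g(t)=t\phi_{q+2}(t\lambda)$ and $h(t)=\phi_q(t\lambda)$ satisfy $D_{tt} g + \lambda^2 g=0$ and $d_{tt} h+\lambda^2 h=0$. The initial conditions are then checked pointwise in $\lambda$. Finally, we pass back to operators by dominated convergence, applying the equations to forms $df$ with $f$ compactly supported and smooth, so that all derivatives in $t$ may be taken inside the spectral integral.

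For b): substitute $r=t\lambda$. Since $\partial_t = \lambda \partial_r$, we get $d_{tt}h = \lambda^2[\phi_q''(r) + (q-1)\phi_q'(r)/r]$. By the defining Bessel equation this equals $-\lambda^2\phi_q(r) = -\lambda^2 h$. The initial data follow directly: $h(0)=\phi_q(0)=1$ gives $u(0)=df$, and $h'(0)=\lambda\phi_q'(0)=0$ gives $u_t(0)=0$. The case $\lambda=0$ is trivial.

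For a): write $g(t)=t\,\psi(t)$ with $\psi(t)=\phi_{q+2}(\lambda t)$. A direct computation gives
\begin{equation*}
g''+(q-1)\Big[\frac{g'}{t}-\frac{g}{t^2}\Big] = t\psi''+2\psi'+(q-1)\psi' = t\Big[\psi''+\frac{(q+1)}{t}\psi'\Big] .
\end{equation*}
The term $(q-1)\psi/t$ cancels against $-(q-1)t\psi/t^2$. The bracket is exactly the Bessel operator with $n=q+2$, since $n-1=q+1$, scaled by $\lambda^2$. By the Bessel equation for $\phi_{q+2}$ it equals $-\lambda^2\psi$, so $D_{tt}g=-\lambda^2 g$. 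Equivalently, one may use the factorization $D_{tt}=(\partial_t+\frac{q}{t})(\partial_t-\frac1t)$ stated above. For the initial data, $g(0)=0$ and $g'(0)=\phi_{q+2}(0)=1$, which gives $u(0)=0$ and $\lim_{t\to0}\partial_t d_t f = df$, consistent with the earlier remark that $\lim d_t/t = d$.

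The main obstacle is the justification step, not the algebra. We must interchange $t$-derivatives with the functional calculus and make sense of $L u$ for $u=d_tf$, where $d_t$ is bounded but $L$ is not. For that I would restrict to $f$ smooth with compact support, so that $df$ lies in the domain of every power of $D$, i.e.\ $\int \lambda^{2k}\, d\mu_{df}(\lambda)<\infty$. I would then use the bounds $|\phi_n|\le 1$ and $|\phi_n'|,|\phi_n''|\le C$, which follow from the integral representation in the appendix, to dominate the difference quotients. I would also check the singular terms at $t\to0$. The quantities $g'/t-g/t^2 = \psi'$ and $h'/t = \lambda^2\phi_q'(r)/r$ stay bounded because $\phi_n'(r)/r\to -1/n$. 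Hence the equations also hold in the limit $t\to 0$.
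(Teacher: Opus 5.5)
Your proposal is correct and takes essentially the paper's route. The paper also verifies both parts by a direct product/chain-rule computation in which $D$ is treated as a commuting scalar (its computer-algebra appendix says this explicitly), reducing everything to the Bessel equations for $\phi_{q+2}$ and $\phi_q$; your substitution $g=t\psi$ is a tidier way of tracking the same cancellation, and it avoids the paper's sign slip in the term $(q-1)u/t^2$. What you add is the analytic justification through the spectral measure of $df$ (domain of $D^2$, differentiation under the functional calculus by dominated convergence, boundedness of the singular terms as $t\to0$), which the paper leaves implicit.
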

\begin{proof}
a) Just verify that $u(t,x) = t \phi_{q+2}(tD) Y(x)$
solves $u_{tt} + u_t (q-1)/t + u (q-1)/t^2 + D^2 u = 0$ provided that
$\phi''(r) + (q+1) \phi'(r)/r  + \phi(r) = 0$ with $f(0)=1$, $f'(0)=0$.
For $r=tD$ we get $\phi''(tD) + (q+1) \phi'(tD)/(tD) + \phi(tD) = 0$.
Differentiate $u$ twice using chain and product rule:
\begin{eqnarray*}
u    &=&  t \phi  (tD)     Y                                         \\
u_t  &=&  t \phi' (tD) D   Y                    +   \phi (tD)   Y    \\
u_tt &=&  t \phi''(tD) D^2 Y  +  \phi'(tD) D Y  +   \phi'(tD) D Y   \; . 
\end{eqnarray*}
Multiply the first by $D^2$, the second by $(q-1)/t$ and switch $q-1$ 
to $q+1$ using
$t \phi'(tD) D^2 Y (q-1)/(tD)  = t \phi'(tD) D^2 Y (q+1)/(tD) - 2 \phi'(tD) D Y$.  
\begin{eqnarray*}
D^2 u        &=& t \phi  (tD)  D^2 Y                                              \\
u_t (q-1)/t  &=& t \phi' (tD)  D^2 Y (q+1)/(tD) +  \phi(tD)    Y (q-1)/t - 2\phi'(tD) D Y \\
u_tt         &=& t \phi''(tD)  D^2 Y            + 2\phi'(tD) D Y                      \\
-u (q-1)/t^2 &=&                                -  \phi(tD)    Y (q-1)/t \; . 
\end{eqnarray*}
The sum of the terms on the left hand side add up to 0 as $\phi$ satisfies the 
deformed wave equation. The right hand side add up also to zero
because $\phi$ satisfies the Bessel equation $\phi''+(q+1) \phi'/(tD) + \phi=0$.  
b) By taking $q$ rather than $q+2$, the forms $u(t,p)=\phi_q(Dt) f(p)$ 
have solve the {\bf zero angular momentum PDE}
$$ u_{tt} + (q-1) \frac{u_t}{t}  = -D^2 u  $$
with $u(0,p)=df$ and $u_t(0,p)=0$. 
The verification is the same. 
 \end{proof} 

\paragraph{}
Because the functions $\phi_k$ are all bounded, $d_t$ is a bounded operator 
on the Hilbert space $\mathcal{H}$ of forms, for all $t \geq 0$. (The case $t=0$
holds trivially because $d_t=0$ for $t=0$.)

\paragraph{}
On other original motivation was to 
search for modified wave equations that solve
$$ \phi_n(Dt) u(0,p) + t \phi_{n+2}(Dt) u_t(0,p) $$ 
because $u(t,p) = \phi_1(Dt) u(0,p) + t \phi_3(Dt) u_t(0,p)$ satisfies the 
classical wave equation $u_{tt} + D^2 u = 0$ in all dimensions $q \geq 1$,
as $\phi_1(r)=\cos(r), \phi_3(r)={\rm sin}(r)/r={\rm sinc}(r)$. 
It turned out that we can indeed find such equations but that in the 
position and velocity case, different differential equations appeared. 

\paragraph{}
As we have seen in the introduction,
the $1$-dimensional case $q=1$ is a prototype case. It is historically important as d'Alembert
was a pioneer in partial differential equations. Before him, wave phenomena had been discussed
in a descriptive way. The deformed wave equation agrees in the case $q=1$ with the wave equation 
$u_{tt}+Lu=0$. In one dimensions, $u(t,p)=t {\rm sinc}(Dt) f$ satisfies the wave equation with 
$u(0)=0,u_t(0)=df$. It is the {\bf d'Alembert solution}
$u(t,p) = [u(0,p+t)-u(0,p-t)]/2$. The expression $u(t,p)/t$ is a {\bf discrete numerical derivative}
that converges for $t \to 0$ to the usual derivative $df$. In the limit $t \to 0$, we need
the initial $0$-form $f$ to be differentiable. The calculus for $d$ needs limits, the calculus
for $d_t$ does not need any limits for $t>0$. 

\paragraph{ }
The second main result is about the modified partial differential equation: 

\begin{thm}[Strong Huygens] 
a) $u(t,p) = d_t f(p) = t \phi_{q+2}(tD) df$ only uses $f$ on $W_t(p)$. \\
b) $u(t,p) = \phi_q(tD) df$ only uses $f$ on $W_t(p)$. 
\end{thm}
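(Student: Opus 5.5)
The plan is to identify both operators explicitly as Fourier multipliers on $\mathbb{R}^q$ and recognize them as geometric averaging operators. In flat space $L=D^2$ acts componentwise on forms as $-\Delta$. Since $\phi_q$ and $\phi_{q+2}$ are even, $\phi_q(tD)$ and $\phi_{q+2}(tD)$ are functions of $L$. So they act coefficientwise on a form as the Fourier multipliers $\xi\mapsto\phi_q(t|\xi|)$ and $\xi\mapsto\phi_{q+2}(t|\xi|)$. The whole proof then reduces to two classical Fourier computations plus Stokes' theorem.

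\emph{Step 1 (spherical means).} Let $M_t g(p)=\frac{1}{|S^{q-1}|}\int_{S^{q-1}} g(p+t\omega)\,d\omega$. Its multiplier is $\frac{1}{|S^{q-1}|}\int_{S^{q-1}} e^{it\xi\cdot\omega}d\omega=\Gamma(\tfrac q2)\,(2/r)^{q/2-1}J_{q/2-1}(r)$ with $r=t|\xi|$. I will check that this function equals $\phi_q(r)$. It is even, equals $1$ at $r=0$, and satisfies $f''+(q-1)f'/r+f=0$: it is the radial eigenfunction of $-\Delta$ with eigenvalue $1$ in $\mathbb{R}^q$. Uniqueness for the Bessel equation of the introduction then gives the identity; the sanity checks $\cos$ for $q=1$, $J_0$ for $q=2$ and ${\rm sinc}$ for $q=3$ all agree. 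Hence $\phi_q(tD)g=M_tg$. Applying this to $g=df$ proves b): $\phi_q(tD)df(p)$ is the spherical mean over $W_t(p)$ of the form $df$. Here I read ``uses $f$ on $W_t(p)$'' as ``uses the data $f,df$ restricted to the wave front''. In the same way, $\phi_q(tD)df=d\,M_t f$ involves only the translates $f(\cdot+t\omega)$.

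\emph{Step 2 (ball means).} Similarly let $A_t g(p)=\frac{1}{|B_t|}\int_{B_t(p)} g$. Its multiplier $\frac{1}{|B_1|}\int_{B_1}e^{it\xi\cdot x}dx$ can be computed by integrating the spherical multipliers of Step 1 over radii $s\in[0,1]$ with weight $s^{q-1}$. This gives $\Gamma(\tfrac q2+1)(2/r)^{q/2}J_{q/2}(r)$. By the same uniqueness argument this function equals $\phi_{q+2}(r)$: it is the radial eigenfunction in dimension $q+2$, and one can also use the recursion $(r^{q}\phi_{q+2})'=q\,r^{q-1}\phi_q$. Therefore $d_tf=t\,\phi_{q+2}(tD)df=t\,A_t(df)$.

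\emph{Step 3 (Stokes).} The key point for a) is that $\int_{B_t(p)} df=\int_{\partial B_t(p)}\nu\wedge f\,dS$ coefficientwise. For $0$-forms this is the divergence theorem for the gradient; for $k$-forms it follows by applying the divergence theorem to each coefficient. Consequently
\[ d_t f(p)=\frac{t\,|S_t|}{|B_t|}\cdot\frac{1}{|S_t|}\int_{W_t(p)}\nu\wedge f\,dS = q\,\frac{1}{|S^{q-1}|}\int_{S^{q-1}}\omega\wedge f(p+t\omega)\,d\omega , \]
which visibly involves only the values of $f$ on $W_t(p)$, with no derivatives at all. For $q=1$ this reproduces the d'Alembert expression $[f(p+t)-f(p-t)]/2$, which is a useful check.

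I expect the main obstacle to be the identification of the Fourier transforms of the normalized sphere and ball measures with exactly $\phi_q$ and $\phi_{q+2}$, including the normalizations. Once both functions are shown to satisfy the respective Bessel ODEs with value $1$ and zero derivative at the origin, uniqueness does the rest. A minor technical point is justifying the functional calculus on compactly supported smooth forms and then extending by boundedness and density. This works because $M_t$, $tA_t d$ and the multipliers are all bounded.
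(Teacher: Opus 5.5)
Your argument is correct, and it takes a genuinely different route from the paper.

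\textbf{The paper's route.} It identifies $\phi_{q+2}(tD)$ with ball averaging through Ovall's Pizzetti-type Taylor expansion $\mathrm{E}_{B_t}[g]=\sum_k t^{2k}L^kg/B(q+2,k)$, checked on polynomial forms. It then applies Stokes only to $(q-1)$-forms, where $\int_{B_t(p)}df$ is a flux through $W_t(p)$. Lower degrees are reached by an induction $k\to k-1$: for a constant vector field $X$ and a form $g$ with $L_Xg=0$, Cartan's formula gives $i_Xd_tg=-d_t i_Xg$. The polarization identity then writes every $k$-form as a combination of such $i_Xg$. Part b) is deduced from a) by differentiating in $t$ and using the recursion $(r^q\phi_{q+2}(r))'=q\,r^{q-1}\phi_q(r)$.

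\textbf{What you do differently.} You replace the Taylor series by the closed-form Fourier transforms of the normalized sphere and ball measures. More importantly, you replace the whole degree-lowering machinery by the coefficientwise divergence theorem $\int_{B_t(p)}df=\int_{W_t(p)}\nu\wedge f\,dS$, which holds in every degree at once.

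\textbf{What this buys.} You get an explicit kernel $d_tf(p)=q\,|S^{q-1}|^{-1}\int_{S^{q-1}}\omega\wedge f(p+t\omega)\,d\omega$ for all $k$. It is expressed through $f$ itself, whereas the paper's induction expresses $d_t(i_Xg)$ through the auxiliary form $g$. It extends the paper's average-flux corollary beyond $(q-1)$-forms and reduces to d'Alembert for $q=1$. It also sidesteps the paper's passage from polynomial forms, which do not lie in $\mathcal{H}$ on $\mathbb{R}^q$, to general forms.

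\textbf{Part b).} You obtain directly that $\phi_q(tD)df(p)$ is the spherical mean of $df$ over $W_t(p)$. Your reading of b) as dependence on the first jet of $f$ along the wave front is not a weakening but the only tenable one. Already for $q=1$ one has $\cos(tD)df(p)=\frac12[f'(p+t)+f'(p-t)]\,dx$. This reading is also exactly what the paper's differentiation-in-$t$ argument yields.

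\textbf{What the paper's route offers.} It uses an algebraic mechanism (Lie derivatives along parallel fields) rather than harmonic analysis. Both proofs, however, rest on flat-space identities at the decisive step, so neither carries over verbatim to a curved $M$.
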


\begin{proof} 
We focus on a) and give in the appendix the argument to reduce b) to a) using the 
identity $\phi_q(r) = r^{1-q} (\phi_{q+2}(r) r^q)'/q$.  \\
(i) The statement is first first shown for $(q-1)$-forms $f$, where it reduces to the
{\bf ball average formula} applied to the $q$-form $g=df$. 
There is a Taylor expansion \cite{Ovall2014} for the ball
expectation $|B_t|^{-1}\int_{B_t} g = \sum_n c_n t^{2n} L^n g$. We noticed that this matches with 
$\phi_{q+2}(tD) g$. {\bf Stokes theorem} then shows that $d_t f$ only invokes $f$ on the wave
front $W_t(p)$.  Writing $\frac{1}{|S_t|} \int_{S_t(p)} f(x) dS$ as 
$\sum_{n=0}^{\infty} c_n t^{2n} L^n f(p)$
is a {\bf Pizzetti type formula}, named after 
{\bf Paolo Pizzetti} \cite{pizzetti1909} who derived 
such a formula first in dimension $q=3$.  \\
(ii) To see the general case for $k$-forms with $k<q-1$, 
we use {\bf inner derivatives} $i_X$ and induction $k \to k-1$ starting
with the induction assumption $k=q-1$, seen in (i). 
If $X$ is a vector field, denote by $i_X: \Lambda_{k+1} \to \Lambda_{k}$ 
the {\bf inner derivative} and by $L_X = i_X d + d i_X$ 
the {\bf Lie derivative} preserving $k$-forms. For background, see \cite{AMR}. 
Expressing the Lie derivative as an anti-commutator of exterior and interior 
derivative is known as {\bf Cartan's magic formula}. In order to extend the 
strong Huygens property from $k$-forms to to $(k-1)$-forms, 
use a $k$-form $f$ and a constant vector field for which $L_X g=0$ so that $i_X d f = - d i_X f$
and hence $i_X d_h f = - d_h i_X f$. The Huygens principle so holds for any 
$(k-1)$-form $i_X f$ provided $f$ was $L_X$ invariant. \\
(iii) In order that (ii) works for all $k$-forms, we need to show for all $k \leq q-1$, 
any $k$-form $f$ can be written as a linear combination of $k$-forms that are in the 
kernel of some $L_X$. 
This however follows from the {\bf polarization identity}. It allows to write 
any monomial $\prod_i x_i^{n_i}$ of degree $n=\sum_j n_j$
as a linear combination of terms $(\sum_i s_i x_i)^n$, where $s_i \in \{-1,1\}$.
Every differential $k$-form $a df$ can be written as a combination of forms 
$i_X g$, where $g$ is a $k$-form such that $L_X g=0$. 
\end{proof} 

\paragraph{}
Lets look at the case $q=3$ and $k=2$. We want to show that we can get any $1$-form 
as a linear combination of forms $i_X g$, where $g$ is a $2$-form and $L_X g = 0$. 
For example, the $2$-form $g=(x+y-z)^n d(x-y) \wedge dy$ satisfies 
$L_X g=0$ for $X=(1,-1,0)$ and $I_X g = (x+y-z)^n dy$.
Using polarization, any monomial 1-form $x^k y^l z^m dy$ can be realized in such a way
Again by linearity, every polynomial 1-form $Q(x,y,z) dy$ has this property.
Using linear combinations of such forms, we can realize any $1$-form
$P(x,y,z) dx + Q(x,y,z) dy + R(x,y,z) dz$ as a linear combination of 
inner derivatives of $1$-forms $i_X g$, where $g$ is a 2-form satisfying $L_X g = 0$. 

\paragraph{}
In the proof, we used the ball average formula in Euclidean space and 
that in flat Euclidean space we can use vector fields $X$ that are
parallel. In a curved space, there are no parallel vector fields in general. 
Also ball averages are different. But once we have established the strong 
Huygens principle for $D_{tt} u = - D^2 u$ in Euclidean space
$\mathbb{R}^n$ we have, (like in the usual 
wave equation $u_{tt} = - D^2 u$), that the strong Huygens property goes 
over to the Riemannian manifold case, using that the exponential map 
near a point is just a coordinate change and that $D=d+d^*$ is a coordinate
independent notion. 

\section{Harmonic forms}

\paragraph{}
Define $D_h = d_h + d_h^*$ and $L_h=D_h=d_h^* d_h + d_h d_h^*$ so that with 
$\psi_n(r)=\phi_n(r) r$, we have $D_h=\psi_{q+2}(h D)$. 
Since $\psi_{q+2}(0)=0$, all classically harmonic forms $f$ are also 
{\bf $h$-harmonic}: they satisfy $L_h f=0$. 
For any compact Riemannian manifold $M$ there is a Hilbert space $\mathcal{H}$ and a Dirac operator 
$D=d+d^*$. The set-up does not require coordinates and locally, we can parametrize a coordinate 
patch and so have the above results for $h$ smaller than the {\bf radius of injectivity} $\rho(M)$ 
of $M$. In the compact space $D$ has discrete spectrum so that $\psi_h(D)$

\begin{thm}[Harmonic forms]
If $M$ is a compact Riemannian manifold, then there are no new $h$-harmonic forms 
for almost all $h<\rho(M)$. 
\end{thm}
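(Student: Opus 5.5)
We compute the kernel of $L_h$ spectrally, using that on a compact manifold $D$ has pure point spectrum. Since $d$ maps the eigenspaces of $D$ into themselves only after pairing with $d^*$, it is cleaner to work with $L=D^2$. Decompose $\mathcal{H}=\bigoplus_{\lambda} E_\lambda$, where $E_\lambda$ is the eigenspace of $L$ for the eigenvalue $\lambda^2$, $\lambda\ge 0$. The set $\{\lambda\}$ is countable, discrete and accumulates only at infinity, and every $E_\lambda$ is finite-dimensional.

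On $E_\lambda$ the Hodge decomposition gives $E_\lambda = E_\lambda\cap \mathrm{im}(d) \oplus E_\lambda\cap\mathrm{im}(d^*)$ for $\lambda>0$, and $E_0$ is the space of classical harmonic forms. Since $d_h=h\phi_{q+2}(hD)d$ and $\phi_{q+2}(hD)$ is a function of $D^2=L$ (because $\phi_{q+2}$ is even), $d_h$ commutes with $L$. On $E_\lambda$ we therefore have $d_h = h\phi_{q+2}(h\lambda)\, d$ and $d_h^* = h\phi_{q+2}(h\lambda)\, d^*$, using that $\phi_{q+2}$ is real. Hence
$$L_h|_{E_\lambda} = h^2\phi_{q+2}(h\lambda)^2 \, L|_{E_\lambda} = h^2\lambda^2\phi_{q+2}(h\lambda)^2,$$
which is just the functional calculus statement $L_h=\psi_{q+2}(hD)^2$ with $\psi_{q+2}(r)=r\phi_{q+2}(r)$. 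One checks that the formula $D_h=\psi_{q+2}(hD)$ is consistent with this: because $d_h^2=0$, the cross terms vanish.

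It follows that $\ker L_h = E_0 \oplus \bigoplus\{E_\lambda : \lambda>0,\ \phi_{q+2}(h\lambda)=0\}$. New $h$-harmonic forms therefore appear exactly when $h\lambda$ is a positive zero of $\phi_{q+2}$. These zeros form a discrete countable set $Z=\{z_1<z_2<\dots\}$. Indeed, $\phi_{q+2}(r)=c\, r^{-q/2}J_{q/2}(r)$ is a nontrivial real-analytic solution of a second order ODE, so its zeros are isolated and simple. The exceptional set of $h$ is $\{z_j/\lambda : j\ge1,\ \lambda\in\mathrm{spec}(D)\setminus\{0\}\}$, a countable set, hence of Lebesgue measure zero. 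For every other $h$ (in particular almost all $h<\rho(M)$), $\ker L_h=E_0$, and restricting to $k$-forms gives the same Betti numbers as for $L$.

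The main point requiring care is justifying the identity $L_h=\psi_{q+2}(hD)^2$ rigorously. This needs the commutation of $d$ with bounded Borel functions of $L$, which I would obtain from the spectral decomposition: $d$ maps $E_\lambda\cap\mathrm{im}(d^*)$ into $E_\lambda$, so it commutes with spectral projections of $L$ on a dense domain. The remaining steps (countability of the spectrum and isolatedness of the Bessel zeros) are standard. The restriction $h<\rho(M)$ plays no role in the spectral argument. It only matters for the geometric wave-front interpretation given by the Strong Huygens theorem.
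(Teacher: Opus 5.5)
Your proof is correct and follows the same spectral argument the paper sketches. On compact $M$ the operator $D$ has discrete spectrum and $D_h=\psi_{q+2}(hD)$, so $\ker L_h=\ker D_h$ is spanned by eigenforms with $\lambda=0$ or $\phi_{q+2}(h|\lambda|)=0$, and the exceptional set of $h$ is countable. You also supply, soundly, details the paper leaves implicit: that $d$ commutes with even functions of $D$, that the zeros of $\phi_{q+2}$ are countable, and that the hypothesis $h<\rho(M)$ is never used.
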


\paragraph{}
For example, if $M=\mathbb{T}^1$ with a metric giving it length is $1$, 
then for all irrational $0<h<1/2$, no new h-harmonic forms are obtained for $d_h$. 
For rational $h>0$, all $2h$-periodic function $f$ or 1-form is h-harmonic because 
$2d_h f(p)=f(p+h)-f(p-h)=0$. 

\paragraph{}
If cohomology is defined via Hodge, the {\bf Betti numbers} $b_k$ can be defined as 
the dimension of the space of $h$-harmonic $k$-forms.

\begin{coro}
The Hodge cohomology does not change for most $h$. 
\end{coro}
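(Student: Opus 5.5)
The plan is to reduce the corollary to the Theorem on harmonic forms via the Hodge theorem, applied degree by degree. On a compact Riemannian manifold $M$, the Betti number $b_k$ equals the dimension of the kernel of $L$ restricted to $\mathcal{H}_k$. The $h$-Betti number $b_k(h)$ is the dimension of the kernel of $L_h = d_h^* d_h + d_h d_h^*$ on $\mathcal{H}_k$. We must show that $b_k(h)=b_k$ for all $h$ outside a null set of $(0,\rho(M))$.

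First, establish the inclusion $\ker L \subset \ker L_h$. As noted above, $d_h = h\phi_{q+2}(hD)d$ and $D_h=\psi_{q+2}(hD)$ with $\psi_{q+2}(0)=0$. A classically harmonic form satisfies $Df=0$, so $D_h f=0$ and hence $L_h f = D_h^2 f = 0$. Since both $L$ and $L_h$ commute with the grading $\mathcal{H}=\oplus_k \mathcal{H}_k$, this inclusion holds in each degree, giving $b_k(h)\ge b_k$.

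Next, show the reverse inequality. By the spectral theorem on compact $M$, $D$ has discrete spectrum $\{\lambda_j\}$ with finite-dimensional eigenspaces, and $\ker L_h = \ker D_h$ is spanned by the eigenspaces of $D$ with $\psi_{q+2}(h\lambda_j)=0$. Here $\lambda=0$ contributes exactly the classical harmonic forms. An eigenvalue $\lambda_j \neq 0$ contributes only if $h\lambda_j$ is a zero of $\psi_{q+2}(r)=r\phi_{q+2}(r)$, that is, a nonzero zero of $\phi_{q+2}$. These zeros form a discrete set $\{z_m\}$, since $\phi_{q+2}$ is, up to a power of $r$, a Bessel function $J_{q/2}$ and is real analytic and not identically zero. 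Therefore the exceptional parameters are $h\in\{z_m/\lambda_j\}$. This is a countable union of countable sets, hence of measure zero. For all other $h$ we get $\ker L_h\cap\mathcal{H}_k=\ker L\cap \mathcal{H}_k$, which is exactly the content of the Theorem. So $b_k(h)=b_k$ for almost all $h$.

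The main obstacle is justifying that $\ker L_h$ is exactly the spectral subspace $\{\psi_{q+2}(h\lambda)=0\}$. This needs $L_h=D_h^2$, which in turn requires that $d_h$ and $d_h^*$ are genuine functions of $D$ compatible with the degree decomposition, so that $d_h^2=0$ and the cross terms vanish. I would first verify this by writing $d_h=\phi_{q+2}(hD)\,h d$ and using that $\phi_{q+2}(hD)$ is even, hence a function of $L$, so it commutes with $d$ and $d^*$. Then $D_h = h\phi_{q+2}(hD)D$, and $L_h=D_h^2$ follows from $d^2=0$. If the paper's $D_h=\psi_{q+2}(hD)$ differs from this only by that even factor, the zero set analysis is unchanged.
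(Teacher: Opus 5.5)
Your proposal is correct and follows the route the paper indicates: write $D_h=\psi_{q+2}(hD)$ and use the discrete spectrum of $D$ on compact $M$, so that $\ker L_h=\ker D_h$ picks up a nonzero eigenvalue $\lambda_j$ only when $h\lambda_j$ is a zero of $\phi_{q+2}$; this happens only for a countable set of $h$, and $b_k$ is then read off degree by degree via Hodge. You supply more detail than the paper does, notably the check $L_h=D_h^2$ (the paper's text has the slip $L_h=D_h$) and the countability of the exceptional set.
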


\section{Physics}

\paragraph{}
Our goal had been to generalize the 1-dimensional discrete derivative to higher
dimensions without introducing any symmetry breaking in $M$. 
The derivative $d_h$ honors all Euclidean symmetries on $\mathbb{R}^q$. 
Symmetries are important because they are related to {\bf conservation
laws}. This {\bf N\"other relation} links momentum or angular momentum conservation
with translation and rotational symmetry for example. Any triangulation or
lattice approach would break symmetries. 

\paragraph{}
If $G$ is a symmetry group on $M$, it has a representation as unitary operators
in $\mathcal{H}$. If $T$ is a rotation for example, then $f \to f(T)f$ defines
a  unitary $U_T$ in $\mathcal{H}$ that commutes with $d$. It still commutes with 
the new derivative  $d_h$. 

\begin{thm}[Symmetry] 
If a unitary operator on $\mathcal{H}$ commutes with $d$, it continues to 
commute with $d_h$.
\end{thm}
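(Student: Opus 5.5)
Since $d_h = h\,\phi_{q+2}(hD)\,d$, the plan is to show that a unitary $U$ commuting with $d$ also commutes with $\phi_{q+2}(hD)$, and then use $U d_h = h\,\phi_{q+2}(hD)\,U d = h\,\phi_{q+2}(hD)\,d\,U = d_h U$.

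\textbf{From $d$ to $d^*$.} I would first read ``commutes with $d$'' as $U(\mathrm{dom}\, d) = \mathrm{dom}\, d$ and $Ud = dU$ there; this is the case for the unitaries $U_T$ induced by isometries. Taking adjoints gives $U^*$ commuting with $d^*$. Since $U$ is unitary, $U^* = U^{-1}$ also commutes with $d$, and the same step then shows that $U$ commutes with $d^*$. Hence $U$ commutes with the self-adjoint operator $D = d + d^*$ in the strong sense, $UDU^{-1} = D$, including equality of domains.

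\textbf{From $D$ to functions of $D$.} Unitary invariance of the spectral calculus says that $U g(D) U^{-1} = g(UDU^{-1})$ for any bounded Borel function $g$. Applied to $g(r) = \phi_{q+2}(hr)$, which is bounded and continuous, this gives $U\phi_{q+2}(hD) = \phi_{q+2}(hD)U$. Equivalently, $U$ commutes with the resolvents $(D - z)^{-1}$, hence with the spectral projections, hence with every bounded Borel function of $D$.

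\textbf{Assembling.} On the dense domain of $d$,
\[
U d_h = h\, U\phi_{q+2}(hD)\, d = h\,\phi_{q+2}(hD)\, U d = h\,\phi_{q+2}(hD)\, d\, U = d_h U .
\]
Both sides are bounded, as noted after the definition of $d_h$, so the identity extends by continuity to all of $\mathcal{H}$. The same argument applied to $d_h^*$ shows that $U$ also commutes with $D_h$ and $L_h$, so the symmetry passes to the deformed Hodge theory.

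\textbf{Main obstacle.} The only delicate point is the unbounded-operator bookkeeping in the first step: one has to make sure the commutation with $d$ is strong enough, meaning $U$ preserves $\mathrm{dom}\,d$, for the adjoint argument to yield $Ud^* = d^*U$ and hence $UDU^{-1} = D$ as an equality of self-adjoint operators. If the hypothesis were meant only on the core $\Lambda(M)$ of compactly supported forms, I would first pass to closures. Since $U$ is bounded and commutes with $d$ on a core, it commutes with the closure $\bar d$. After that, the functional calculus step is routine.
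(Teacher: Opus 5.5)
Your argument is correct, and it is the reasoning the paper leaves implicit. The paper gives no separate proof: it only notes that $U_T$ commutes with $d$ and asserts the same for $d_h$, which rests on $d_h = h\,\phi_{q+2}(hD)\,d$ together with $U$ commuting with $d^*$, hence with $D$ and with its bounded functional calculus, exactly as you spell out.

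One caveat concerns only your closing remark about cores. Closing up from a core gives merely $U\bar d \subseteq \bar d U$, so to get $U(\mathrm{dom}\,\bar d) = \mathrm{dom}\,\bar d$ you also need the hypothesis for $U^{-1}$, e.g.\ that $U$ maps $\Lambda(M)$ onto itself, as $U_T$ does.
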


\paragraph{}
The fact that symmetry is preserved and allows to transition easily from a continuum to a 
quantized situation. The hydrogen operator $L=-h^2 \Delta/2m-\frac{e^2}{4 \pi \epsilon_0}/r$ 
for example on $0$-forms has eigenvalues $\lambda_n = -C/n^2$ with 
$C=m e^4/(2 (4\pi \epsilon_0)^2 h^2)$. 
Now $d_h \sim d h$ and $L_h \sim L h^2 = -\Delta h^2$. 
The Hydrogen operator in spherical coordinates 
is now $O(h^4)$ close to the deformed operator $L_h/m - \frac{e^2}{4 \pi \epsilon_0}/r$. 
The eigenvalues in the deformed situation are $h^2$ close.
In the case of the Hydrogen atom, one has 
$C=mc^2 \alpha^2/2 = 13.605 ... eV =2.1798 ...\cdot 10^{-18} J$, 
with the {\bf fine-structure constant} $\alpha \sim 1/137$.
This $C$ is the {\bf ground state energy}. When replacing $L$ with $L_h$, the spectrum is 
indistinguishable. Modeling the Hydrogen atom using finite geometries would be hard because 
spherical coordinates are not available in such a frame work.  
If we replace $-h^2 \Delta$ with the zero Laplacian in $L_h$ and take $h$ to be of 
Planck scale, all the physics would look the same. 

\paragraph{}
The {\bf Maxwell equations} can be played on any Riemannian manifold 
\footnote{pseudo Riemannian manifold} $M$ with exterior derivative $d$.
Given a $1$-form $j$, it is the system of partial differential equation $dF=0, d^* F = j$
for an unknown $2$-form $F$. If $F=dA$ (which is possible if $M$ is simply connected or even if $b_1=0$),
then Maxwell means $d^* dA=j$. Under a {\bf Coulomb gauge} $A \to A+df$, we can assume $d^*A=0$
which means that the Maxwell equations are reduced to the {\bf Poisson equation} 
$L A = (d d^* + d^* d) A = j$. 
If $j$ is in the orthogonal complement of the harmonic forms ${\rm ker}(L)$, we have a unique
solution $A=L^{-1} j$, (where $L^{-1}$ is the pseudo inverse). 
The electro-magnetic field then is given as $F=dA$. In 
vacuum and in a space time manifold with signature $-+++$, the equation $L u=0$ is a wave equation. 
In our case, where time is separated from space, we have the d'Alembert operator 
$\Box{}=\partial_t^2 + L$ so that the
wave equation in space dimension $q$ is $A u =0$. The deformed d'Alembert operator is 
$\Box{}=D_{tt} + L$. We have seen that $\Box{} u =0$ with $u=0, \lim_{t \to 0} u_t(t,x) = du(x)$ 
is solved by $u(t,x) = d_t f(x) = t \phi_{q+2}(tD) df$. 

\section{Questions}

\paragraph{}
A discrete time wave evolution $T: (u,v) \to (D_h u-v,u)$ is defined, if $h$ is such that
$D_h$ has norm smaller than $1$. It is conjugated to $(e^{i \tilde{D}_h},e^{-i \tilde{D}_h})$
for a slightly changed $\tilde{D}$. 
See \cite{Kni98}. For small $h$, this is almost indistinguishable from the actual unitary evolution.
What are the properties of this wave evolution? Do we still have the strong Huygens principle?
We liked the discrete cellular automaton type time evolution for numerical purposes so that the
wave or Schr\"odinger evolution in the discrete has finite speed of propagation. On a discrete
geometry with continuous time $t$, there is no locality for the wave equation $u_{tt} = -L u$;
we need also time to be discrete. The above evolution $T$ has a finite speed of propagation 
because $d_h$ only taps into parts of space in distance $h$. 

\paragraph{}
The spectrum of the deformed Laplacian $L_h= d_h^* d_h + d_h d_h^* = D_h^2$ is determined
by the spectrum of $L$ if $h>0$. (We write here $h$ rather than $t$ because one of 
the applications we have in mind is to use $d_h$ for small $h$, where $h$ is on the order of the 
{\bf Planck constant}. As $d_h^* = h \phi_{q+2}(h D) d^*$, we have 
$D_h = d_h+d_h^* = \phi_{q+2}(h D) hD$.
This implies that $D_h$ has the same eigenfunctions than $D$ and $\lambda_j(D_h) = \psi(\lambda_j(D))$,
where $\psi(r) = \phi_{q+2}(r) r$, a bounded function. 
Similarly, also the eigenfunctions of $L_h$ are the same than
the eigenfunctions of $L$ and $\lambda_j(D_h) = \psi(\lambda_j(L))$. 
It would be nice to use $L_h$ as a numerical tool. 
Is there a natural modification $V_h(r)$ of the potential $V(r) = 1/r$ so that we have a bounded
analog of the Hydrogen operator in the form of $L_h +V_h$?

\paragraph{}
The geometry of wave fronts can become very complicated, even in integrable situations like
the ellipsoid. Can we relate the spectrum of $L$ and so the spectrum of $d_t$ 
with the wave front? It would be nice for example to see spectrally 
whether wave front become dense. In the case of a round sphere, the wave front oscillates
and stays bounded. 

\paragraph{}
The classical wave equation $u_{tt} + L u=0$ has {\bf Klein-Gordon generalizations}
$u_{tt} + (L+m) u = 0$, which does not satisfy the strong Huygens principle. 
What happens in the modified case 
$u_{tt} + (q-1) u_t/t + (L+m) u$ or 
$u_{tt} + (q-1) u_t/t + (q-1) u/t^2 + (L+m) u$? Are there cases,
where we have sharp wave fronts? 

\paragraph{}
If we look at a 4-manifold and consider the front $W_t(p)$ to be space, we have a 
{\bf space-time model} which locally shows Lorentz symmetry. 
Moving $p$ around in $M$ is a space time change. 
Every starting point $p$ defines a different space-time structure.
On a Riemannian manifold, the radius of injectivity in general depends on $p$. 
So, there are points $p_1$, where the {\bf space 3-manifold} $W_t(p_1)$ is regular while 
for $p_2$ the {\bf space 3-manifold} $W_t(p_2)$ has developed a singularity as $t$ is larger
than the radius of injectivity. The space-time structure depends on the starting point $p$,
where the space-3-manifold is a point. On a 4-dimensional ellipsoid as space time 4-manifold,
there are initial points for which there is a ``big crunch", a time where $W_t(p)$ is a again
a point. For most other points, the space manifold expands indefinitely. 
This is highly non-trivial even in toy models like circular billiard, a 2-manifold with 
boundary where the length of the wave front of a "a drop of
water in a cup" asymptotically behaves like $|W_P(t)| = \arcsin(|P|) t + o(t)$ \cite{Vicente2020}.

\paragraph{}
For a finite abstract simplicial complex $G$ with $n$ elements,
there is an exterior derivative $d$ on the finite dimensional Hilbert space $l^2(G)$. 
Functions on $G_k= \{ x \in G, |x|={k-1} \}$ are $k$-forms. 
The exterior derivative maps $l^2(G_k)$ to $l^2(G_{k+1})$. If $x$ is a simplex, 
then $df(x)$ is the flux of $f$ through the boundary $\delta x$. We do not need
to deform $d$ because it is already a bounded operator. 
If $G$ is a $q$-manifold and $D=d+d^*$ is the Dirac matrix, a $n \times n$ matrix, 
then we can still do a deformation $D_t = \phi_{q+2}(D t) t D$. We can also look 
at the now ordinary differential equation $u_{tt} + (q-1)[ u_t/t - u/t^2] = -D^2 u$. 
We can still deform exterior derivatives using the geodesic flow, but what happens
there is still unexplored. 

\section{Conclusion} 

\paragraph{}
We have deformed the exterior derivative $d$ so that 
$d_h$ has become a bounded operator on $\mathcal{H}$. Boundedness
was important. Operators like $D$ or $L=D^2$ were only 
densely defined self-adjoint operators on the Hilbert space $\mathcal{H}$.
Having a bounded operator  implies that the operator
naturally extends to all elements of the Hilbert space $\mathcal{H}$. 
The new Laplacian $L_h = (d_h+d_h^*)^2$ behaves like $h^2 L$ for small $h$.

\paragraph{}
The {\bf Stokes theorem} $\int_G dF = \int_{\delta G} F$ for 
smooth $k$-forms in a compact $(k+1)$ manifold $G$ with 
$k$-manifold $\delta G$ as boundary now reads as
$$  \langle G, d_t F \rangle = \langle d_t^* G, F \rangle \; . $$
{\bf Fields} $F$ and smooth geometries $G$ (which are examples of 
{\bf de Rham currents}) are now on the same footing as both are 
elements in the same Hilbert space. There is a symmetry 
between fields and geometries. 

\paragraph{}
It follows for example that we can relax about the regularity of 
geometries. We have a bounded exterior derivative on manifolds
which are singular like polyhedra or varieties on which there is a natural 
geodesic metric. Geometries can also be fractal. If $G$ for example is the
{\bf Koch fractal}, a region $G$ in $\mathbb{R}^2$  with the fractal boundary $\delta G$
called Koch curve. If $F$ is a $1$-form, we can look at the curl $d_t F$ for some small $t$ 
and integrate this over $G$. It is just an inner product $\langle G, d_t F \rangle$
as we can see also write $G = 1_G(x,y) dxdy$ as a $2$-form. Now
this is $\langle d_t^* G, F \rangle$ and $d_t^* G$ is an $L^2$ 1-form a relatively
regular de-Rham current, compared with the traditional boundary. 

\paragraph{}
This derivative {\bf quantizes distances}: only $f(q)$ for $q$ in distance $h$ from 
$p$ matter when evaluating $d_h f(p)$. We have a {\bf calculus without limits} because
we do not need to take the limit $h \to 0$. In physical circumstances, taking a Planck
distance size $h$ produces the same result in traditional physics. 
The derivative in principle works for any Riemannian manifold for which singularities
in the metric are of codimension $2$. On a solid polyedron for example, a 3-manifold
with boundary, the calculus works as the singularities that affect the wave fronts are
locate on vertices only.

\paragraph{}
The calculus resembles the discrete case if $M$ is an abstract simplicial 
complex, where $df(p)$ for a $k$-simplex $p$ only needs to know $q$, 
where $q$ are the {\bf boundary simplices} of $p$ of codimension $1$. 
We can in discrete manifolds for example define a new exterior derivative 
that instead of summing up over all boundary simplices sums up over all 
boundary simplices of all adjacent simplices. 

\section*{Appendix: Bessel }

\paragraph{}
The Bessel type differential equation $u'' + (q-1) u'/r  + u =0$ for a function 
$u(r)$ is the {\bf radial Helmholtz equation} for $k=1$ in dimension $q$, with energy $1$ and 
angular momentum $0$. We wrote $\phi_q(r)$ for its solution given the initial condition 
$u(0)=1, u'(0)=0$. The eigenvalue problem $\Delta u + c^2 u = 0$ becomes in 
spherical coordinates 
$$  u_{rr} + \frac{(q-1)}{r} u_r + \frac{1}{r^2} \Delta_{S^{q-1}} u + c^2 u = 0  \; . $$
If $r$ is replaced by $r/c$, the energy can be scaled to become $1$. 
With a substitution $\nu=(q-2)/2$ and $y(r)=r^{\nu} u(r)$ it becomes the 
{\bf standard Bessel equation}
$$ y'' + \frac{y'}{r} - \nu^2 \frac{y}{r^2}  + y = 0  $$       
The solutions are Bessel functions of order $\nu$ given by 
$r^\nu J_{\nu}(r)$ and  $r^\nu Y_{\nu}(r)$, which are known as 
{\bf Bessel functions of the first kind and second kind}.

\paragraph{}
Related to the deformed wave equation, we had introduced the
{\bf modified acceleration operator} $u'' + (q-1) [\frac{u'}{r} - \frac{u}{r^2}]$. 
It is the {\bf radial Helmholtz operator} 
$$ u'' + (q-1) \frac{u'}{r} - l(l+q-2) \frac{u}{r^2}  + c^2 u= 0 $$
with {\bf angular momentum} $l=1$ and {\bf energy} $c=0$ in {\bf dimension} $q$.

\paragraph{}
For angular momentum $l=1$ and energy $c=0$, this radial Helmholtz operator becomes
$$ u'' + (q-1) \frac{u'}{r} - (q-1) \frac{u}{r^2}   \; . $$
For $l=0$  and $c=0$, it is the operator
$$ u'' + (q-1) \frac{u'}{r}   \; .  $$
These are the time acceleration operators we have used in the modified wave equations. 

\paragraph{}
The equation
$$ u'' + (q-1) \frac{u'}{r} - (q-1) \frac{u}{r^2} + c^2 u =0  $$
becomes after a substitution $y=r^{((q-2)/2)} u$ the Bessel equation
$$ y'' + \frac{y'}{r} - (\frac{q}{2})^2 \frac{y}{r^2}  + c^2 y = 0  \; .  $$

\paragraph{}
The solutions $\phi_q$ to the Bessel equation 
$$ u''(r) + (n-1) \frac{u'(r)}{r} + u(r)=0, u(0)=1,u'(0)=0 $$
are also {\bf hypergeometric functions}. One can see this from the Taylor expansion 
$\phi_q(r) = \sum_{k=0}^{\infty} b_k r^{2k}$ with 
$1/b_k = B(q,k) = \prod_{j=1}^k (-2 j) (q - 2 + 2 j)$. 

\paragraph{}
To summarize, we have now seen four different ways to look at $\phi_q(r)$. It is a 
solution of a radial Helmholtz equation, expressible using Bessel functions of the 
first kind, given as a confluent hypergeometric functions or given in a series expansion which 
appeared, when looking at sphere averages. 

\begin{lemma}
$\phi_q(r) = F_1({\frac{q}{2},-\frac{r^2}{4}}) 
           = J_{\frac{q}{2}-1} \Gamma(\frac{q}{2}) (\frac{r}{2})^{\frac{q}{2}-1} 
           = \sum_{k=0}^{\infty} b_k r^{2k}$.
\end{lemma}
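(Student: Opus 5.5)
The plan is to treat the series $\sum_k b_k r^{2k}$ as the central object. I will show it is the solution $\phi_q$ of the Bessel-type initial value problem, and then recognize it termwise as the confluent hypergeometric series and as the normalized Bessel function of the first kind. The first step is uniqueness. The equation $u''+(q-1)u'/r+u=0$ has a regular singular point at $r=0$ with indicial equation $s(s-1)+(q-1)s=0$, so the exponents are $s=0$ and $s=2-q$. For $q\ge 3$ the second Frobenius solution blows up like $r^{2-q}$. For $q=2$ it has a $\log r$ term. For $q=1$ the point is regular and the second solution is odd, so $u'(0)=0$ excludes it. Hence for every $q\ge 1$ there is exactly one solution with $u(0)=1$, $u'(0)=0$ that is regular at $0$, and it is even and analytic. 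Any even power series solving the ODE with constant term $1$ is therefore $\phi_q$.

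Next I substitute $u=\sum_k b_k r^{2k}$ with $b_0=1$ into the equation. The coefficient of $r^{2k-2}$ gives
\[ 2k(2k-1)b_k + (q-1)2k\,b_k + b_{k-1} = 0, \]
so $b_k = -b_{k-1}/\bigl(2k(q-2+2k)\bigr)$. Iterating yields $1/b_k=\prod_{j=1}^k(-2j)(q-2+2j)$, which is the formula for $B(q,k)$ quoted before the lemma. The ratio test shows the series has infinite radius of convergence, which justifies the termwise differentiation. This proves the third expression.

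For the hypergeometric form, I write $2j(q-2+2j)=4\,j\,(\tfrac q2+j-1)$. Then $\prod_{j=1}^k 2j(q-2+2j)=4^k k!\,(\tfrac q2)_k$ with the Pochhammer symbol, so
\[ b_k r^{2k}=\frac{(-r^2/4)^k}{k!\,(q/2)_k}. \]
This is exactly the general term of ${}_0F_1(;\tfrac q2;-r^2/4)$, which is what the notation $F_1(\tfrac q2,-\tfrac{r^2}{4})$ in the lemma denotes. For the Bessel form I set $\nu=\tfrac q2-1$ and use the standard series $J_\nu(r)=\sum_k (-1)^k (r/2)^{2k+\nu}/\bigl(k!\,\Gamma(k+\nu+1)\bigr)$. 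Multiplying by $\Gamma(\nu+1)(r/2)^{-\nu}$ and using $\Gamma(k+\nu+1)/\Gamma(\nu+1)=(\nu+1)_k=(\tfrac q2)_k$ reproduces the same series term by term. Consistent with the substitution $y=r^{\nu}u$ in the appendix, this gives $\phi_q(r)=\Gamma(\tfrac q2)(\tfrac r2)^{1-\frac q2}J_{\frac q2-1}(r)$. As a sanity check, $q=1,2,3,4$ recover $\cos r$, $J_0$, ${\rm sinc}$ and $2J_1(r)/r$.

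The main obstacle is bookkeeping rather than analysis. First, the power of $r/2$ in the Bessel expression must be $1-\tfrac q2$ for the identity to hold, so the lemma's exponent has to be read with that sign. Second, the uniqueness argument must be done carefully in the low-dimensional cases $q=1,2$, where the Frobenius exponents coincide or differ by an integer; I would handle those two cases directly by exhibiting the second solution, which is $\sin r$ for $q=1$ and $Y_0$ for $q=2$.
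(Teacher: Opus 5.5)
Your proof is correct, and it follows a more self-contained route than the paper. The paper's justification has three parts. First, the substitution $y=r^{\nu}u$ with $\nu=(q-2)/2$ turns the radial equation into the standard Bessel equation of order $\nu$. Second, it remarks that the Taylor coefficients $1/b_k=B(q,k)$ exhibit the hypergeometric form. Third, it ``illustrates'' the equality of all four expressions by a computer-algebra check for $q=1,\dots,10$.

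You instead make the power series the hub. You use a Frobenius analysis for uniqueness, derive the recursion $b_k=-b_{k-1}/\bigl(2k(q-2+2k)\bigr)$, and match termwise against the ${}_0F_1$ series and the standard series for $J_\nu$. This gains three things:
\begin{itemize}
\item it proves the lemma for every $q\ge 1$, not only $q\le 10$;
\item it supplies the uniqueness that the paper assumes when it calls $\phi_q$ ``the unique solution'' of an initial value problem at a singular point;
\item it fixes automatically which Bessel solution is meant and with which constant.
\end{itemize}
The change of variables alone only shows, for $r>0$, that $\phi_q$ is some combination of $r^{-\nu}J_\nu$ and $r^{-\nu}Y_\nu$. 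The selection of $J_\nu$ by regularity, and the normalization $2^{\nu}\Gamma(\frac{q}{2})$, are left implicit there. What the paper's route offers in return is a quicker conceptual explanation of why Bessel functions appear at all.

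Your correction of the exponent to $1-\frac{q}{2}$ is right. It agrees with the paper's own code, which divides by $(r/2)^{q/2-1}$.

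One small imprecision: the exponents $0$ and $2-q$ differ by an integer for every integer $q$, not just for $q=1,2$. What actually singles out $q=1,2$ is that the second solution is not forced to blow up like $r^{2-q}$. For $q=2$ it is logarithmic, and for $q=1$ it is the regular odd solution $\sin r$. Your case-by-case treatment already handles both correctly.
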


\paragraph{}
While already explained by changes of variables, it is best illustrated in 
the language of a computer algebra system which has these functions already baked in: 

\begin{tiny} 
\lstset{language=Mathematica} \lstset{frameround=fttt}
\begin{lstlisting}[frame=single]
B[q_,k_] := Product[(-2 j) (q-2+2 j),{j,1,k}];
CheckBesselIdentities[q_]:=Module[{},
phi    = f[r] /. First[DSolve[{f''[r]+(q-1)*f'[r]/r+f[r]==0,f[0]==1,f'[0]==0},f[r],r]];
hyper  = Hypergeometric0F1[q/2,-r^2/4]; 
bessel = BesselJ[q/2-1,r] Gamma[q/2]/(r/2)^(q/2-1);
series = Sum[r^(2 k)/B[q, k], {k, 0, Infinity}];
Print[{phi,hyper,bessel,series}];
FullSimplify[phi==hyper==bessel==series]];    
Table[CheckBesselIdentities[q],{q,1,10}]
\end{lstlisting} 
\end{tiny}

\paragraph{}
From the classical Bessel identity $(r^n*J_n(r))'=r^n*J_{n-1}(r)$, one
gets that the functions $\phi_q$ satisfy the following recursion:

\begin{lemma}
$(\phi_{q+2}(r) r^q)' = q \phi_q(r) r^{q-1}$. 
\end{lemma}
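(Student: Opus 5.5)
I would verify the recursion $(\phi_{q+2}(r) r^q)' = q\,\phi_q(r) r^{q-1}$ coefficientwise from the power series in the first Lemma, $\phi_q(r)=\sum_{k\ge 0} b_k(q) r^{2k}$ with $1/b_k(q)=B(q,k)=\prod_{j=1}^k(-2j)(q-2+2j)$. I would then cross-check the result against the classical Bessel identity $(r^\nu J_\nu)'=r^\nu J_{\nu-1}$ as a second route.

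\textbf{Series route.} The left-hand side is
$$(\phi_{q+2}(r) r^q)'=\sum_{k\ge0} b_k(q+2)\,(2k+q)\, r^{2k+q-1},$$
and the right-hand side is $\sum_{k\ge 0} q\, b_k(q) r^{2k+q-1}$. So it suffices to show $(2k+q)\,b_k(q+2)=q\,b_k(q)$ for every $k$, that is,
$$q\,B(q+2,k)=(2k+q)\,B(q,k).$$
The factors $(-2j)$ agree on both sides. For the remaining factors we have $\prod_{j=1}^k (q+2j)$ in $B(q+2,k)$ versus $\prod_{j=1}^k(q-2+2j)=\prod_{j=0}^{k-1}(q+2j)$ in $B(q,k)$. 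Their ratio is $(q+2k)/q$, a telescoping cancellation, which gives exactly the needed identity. For $k=0$ both sides equal $q$.

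\textbf{Bessel cross-check.} By the first Lemma, $\phi_{q+2}(r)=\Gamma(\tfrac q2+1)\,2^{q/2}\, r^{-q/2}J_{q/2}(r)$. Hence
$$\phi_{q+2}(r)r^q=\Gamma(\tfrac q2+1)\,2^{q/2}\, r^{q/2}J_{q/2}(r).$$
Applying $(r^\nu J_\nu)'=r^\nu J_{\nu-1}$ with $\nu=q/2$ gives $\Gamma(\tfrac q2+1)2^{q/2} r^{q/2}J_{q/2-1}(r)$. Meanwhile $q\,\phi_q(r) r^{q-1}=q\,\Gamma(\tfrac q2)2^{q/2-1} r^{q/2}J_{q/2-1}(r)$. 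These agree because $q\,\Gamma(q/2)/2=\Gamma(q/2+1)$.

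\textbf{Convergence.} The only point needing care is justifying termwise differentiation. This is immediate because $|b_k|\le 1/(2^k k!)^2$ for $q\ge1$, so the series is entire. I expect no real obstacle beyond the index bookkeeping in the product, particularly for small $q$ such as $q=1$, where $q-2+2j$ starts at $1$, and $q=2$.
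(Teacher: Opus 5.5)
Your proof is correct, but your main route differs from the paper's. The paper gets the recursion in one line from the classical identity $(r^\nu J_\nu(r))'=r^\nu J_{\nu-1}(r)$ together with $\phi_q(r)=\Gamma(\frac{q}{2})(\frac{r}{2})^{1-\frac{q}{2}}J_{\frac{q}{2}-1}(r)$. That is exactly your ``Bessel cross-check.''

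Your primary argument instead works directly with the Taylor coefficients and reduces the statement to the telescoping identity $qB(q+2,k)=(q+2k)B(q,k)$. This makes it self-contained, since the coefficients come straight from the defining ODE via $b_k=-b_{k-1}/(2k(q-2+2k))$. You therefore never need the Bessel/Gamma normalization, which the paper supports only by a change of variables and a computer check. In the first lemma as displayed, the factor $(\frac{r}{2})^{\frac{q}{2}-1}$ sits on the wrong side of the fraction. Your cross-check uses the correct, divided form, which matches the paper's code. The series route also avoids half-integer powers $r^{q/2}$ for odd $q$.

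The paper's route is shorter and places the recursion among the standard Bessel ladder identities. The classical identity is itself usually proved by termwise differentiation of the series for $J_\nu$. So the two proofs are the same computation, with the Gamma factors either kept explicit or absorbed into $B(q,k)$.

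One small slip: the bound $|b_k|\le 1/(2^k k!)^2$ requires $q\ge 2$. For $q=1$ you have $|b_1|=\frac{1}{2}>\frac{1}{4}$. Use instead $|b_k(q)|\le 1/(2k)!$, which holds for all $q\ge 1$ because $q-2+2j\ge 2j-1$. This still makes the series entire and justifies termwise differentiation.
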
 

\paragraph{}
This shows that if $d_t f = t\phi_{q+2}(tD) df$ satisfies the strong Huygens property, 
also $\phi_q(tD) df$ satisfies the strong Huygens property. 
Since $\phi_{q+2}(tD) df$ only depends on $f \in W_t(p)$, we have for $h<t$
that $\phi_{q+2}(tD) df$ does not involve $f(x)$ with $x \in B_h(p)$. 
So, also the traditional time derivative $D \phi'_{q+2}(tD) df$ does not depend on $x \in B_h(p)$
and also $D \phi'_{q+2}(tD) (tD)^{1-q} df$ not, where $D^{-1}$ is understood as the pseudo inverse.
But the above identity sees this as $D q \phi_q(tD) df$. So, also $\phi_q(tD) df$ does
not depend on $x \in B_h(p)$.

\section*{Appendix: Kirchhoff equations}

\paragraph{}
Reusing the constants $B(q,k) = \prod_{j=1}^k (-2j) (q+2j)$,
\cite{Ovall2014} gives the following {\bf Taylor formula} for the mean value over a ball
$$ {\rm E}_{B_t}[g] = \sum_{k=0}^{\infty} \frac{t^{2k}}{B(q+2,k)} L^k  g \; $$
and over a wave front $W_t$ and
$$ {\rm E}_{W_t}[g] = \sum_{k=0}^{\infty} \frac{t^{2k}}{B(q,k)} L^k g \; .  $$
\footnote{\cite{Ovall2014} works with scalar Laplacians $\Delta=-L_0$ and
scalar functions $g \in C^{2p}$ and finite Taylor sums. We only need the
formula for polynomials, as they are dense in $L^2$. In general, polynomial
differential forms are dense in the Hilbert space $\mathcal{H}$. }

\paragraph{}
If $g=df$, where $f$ is a $(q-1)$-form, then ${\rm E}_{B_t}[g]$ is by {\bf Stokes theorem}
equal to the normalized flux $(1/|B_t|) \int_{W_t} f$ through the sphere $W_t$. 
Hence, the sphere and ball volumes relate as $|W_t| t=q |B_t|$ we have $(t/q)/|B_t| = 1/|W_t|$ 
and so ${\rm E}_{W_t}[f] = (t/q) {\rm E}_{B_t}[g]$. 

\paragraph{}
We observed that ${\rm E}_{B_t}[g] = \phi_{q+2}(D t) g$, and
that ${\rm E}_{W_t}[g] = \phi_{q}(D t) g$, where $\phi_q$ solves the 
{\bf Bessel differential equation}
$f_{rr} + (q-1) f_r/r + f=0$ with $f(0)=1, f_r(0)=0$. 

\paragraph{}
In the following, we could assume $f$ to be a (q-1) form in $\mathcal{H}_{q-1}$ but then it would only 
hold for almost all $p$. If $f$ is assumed to be continuous, then it holds for all center points $p$. 

\begin{coro}
If $f$ is a continuous $(q-1)$-form, then 
$d_t f(p) = t \phi_{q+2}(Dt) df(p)$ is $q$ times the average flux of 
$f$ through $W_t(p)$. 
\end{coro}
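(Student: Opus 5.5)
Since $d_t f = t\,\phi_{q+2}(Dt)\,df$ by definition, we set $g = df$, which is a $q$-form. On $q$-forms in $\mathbb{R}^q$ the Hodge Laplacian acts componentwise as the scalar Laplacian $-\Delta$ on the coefficient of $dx_1\wedge\cdots\wedge dx_q$. The first step is to identify $\phi_{q+2}(Dt)g$ with the ball average ${\rm E}_{B_t}[g](p)$. The function $\phi_{q+2}$ is even, so $\phi_{q+2}(Dt)$ is a function of $t^2 L$. By the series in the Lemma, $\phi_{q+2}(Dt) = \sum_k t^{2k} L^k / B(q+2,k)$, and this coincides term by term with the Taylor formula for ball means from \cite{Ovall2014} quoted above. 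We first verify the identity for polynomial $f$, where both series terminate, so there is no convergence issue. Equivalently, we can check it on plane waves $e^{i\xi\cdot x}$: there $L = |\xi|^2$, and the ball average of $e^{i\xi\cdot x}$ is the classical $\phi_{q+2}(|\xi| t)$, the Fourier transform of the normalized indicator of the ball, namely $\Gamma(\frac q2+1)(2/r)^{q/2}J_{q/2}(r)$, matching the Lemma with $q$ replaced by $q+2$.

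The second step applies Stokes' theorem to the $(q-1)$-form $f$ on $B_t(p)$:
$$\int_{B_t(p)} df = \int_{W_t(p)} f .$$
Hence ${\rm E}_{B_t}[df](p) = |B_t|^{-1}\int_{W_t(p)} f$. Using $q|B_t| = t|W_t|$, we get
$$ t\,\phi_{q+2}(Dt)\,df(p) \;=\; \frac{t}{|B_t|}\int_{W_t(p)} f \;=\; \frac{q}{|W_t|}\int_{W_t(p)} f ,$$
which is $q$ times the average flux of $f$ through $W_t(p)$. This is the claim for polynomial forms.

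The final step removes the polynomial restriction and is where the continuity hypothesis enters; we expect it to be the main obstacle. Both sides are continuous in $f$: the left side because $d_t$ is a bounded operator on $\mathcal{H}$, and the flux functional $f \mapsto \int_{W_t(p)} f$ because it is continuous under uniform convergence on compacts. Bounded-operator continuity alone only controls the left side in $L^2$, which would give the identity for almost every $p$. To get it pointwise we approximate $f$ uniformly on a neighborhood of $\overline{B_t(p)}$ by polynomial forms $f_n$, using Stone--Weierstrass. We then need the left side at $p$ to converge as well. The cleanest way is to observe that, as an operator, $d_t$ acting on $(q-1)$-forms is convolution with the $q$ times normalized surface measure on the sphere of radius $t$, applied after taking the flux component. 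The identity is therefore an equality of two operators that agree on a dense set and are both bounded. The right side, convolution with a finite measure, is continuous in $p$ for continuous $f$, so we use it to define the pointwise value, and the equality then holds at every $p$.
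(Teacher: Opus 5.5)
Your proposal is correct and follows the paper's own route. You match the series for $\phi_{q+2}(tD)$ with Ovall's Taylor formula for ball means, apply Stokes on $B_t(p)$, and convert with $t|W_t|=q|B_t|$. Your final step makes precise the paper's one-line remark that for $f\in\mathcal{H}_{q-1}$ the identity holds for almost every $p$, and at every $p$ when $f$ is continuous.

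One repair is needed in that last step. Polynomial forms are not square integrable on $\mathbb{R}^q$, so they are not a dense subset of $\mathcal{H}$, and $\phi_{q+2}(tD)$ is not a priori defined on them. Take compactly supported smooth forms as the dense set instead, and obtain $\phi_{q+2}(tD)g={\rm E}_{B_t}[g]$ for every $g\in\mathcal{H}_q$ from your plane-wave computation: both operators are the Fourier multiplier $\phi_{q+2}(t|\xi|)$.
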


\paragraph{}
This illustrates the strong Huygens principle for the deformed wave equation. But this 
interpretation as an average flux only holds in Euclidean space: we have used a 
relation between ball and sphere volumes which do not hold in a general Riemannian manifold. 
In a general Riemannian manifold $M$, it is still a multiple of the flux and we still have a 
strong Huygens principle. 

\paragraph{}
The {\bf Kirchhoff solutions} of the classical wave equations generalizes to all dimensions 
(see \cite{Evans2010} page 73 or \cite{Simon2017} Theorem 6.9.8, Theorem 6.9.10). 
One in general assumes the wave equation to evolve scalar fields $g$.
It also applies to $q$-forms = volume forms $g dx$, which is the case we use. 

\paragraph{}
Define ${\rm E}_{W_t}[g]$ and ${\rm E}_{B_t}[g]$ as averages 
over the sphere or ball. Define the pseudo differential operators
$T_t=[(\frac{1}{t} \partial_t)]^{1/2}$ and $\gamma_q = 1,3,5 \cdots (q-2)$. Assume the
initial condition $g$ and initial velocity $h$ are given. In odd dimensions:
$$ u = \frac{1}{\gamma_q} \partial_t T_t^{(q-3)/2} (t^{q-2} {\rm E}_{W_t}[g]) 
     + \frac{1}{\gamma_q}            T_t^{(q-3)/2} (t^{q-2} {\rm E}_{W_t}[h]) \; . $$
Define in $B_t(x)$ the function $\tilde{g}(y)= g(y)/\sqrt{t^2-|x-y|^2}$.
In even dimensions,
$$ u = \frac{1}{\gamma_q} \partial_t T_t^{(q-2)/2} (t^{q} {\rm E}_{B_t}[\tilde{g}]) 
     + \frac{1}{\gamma_q}            T_t^{(q-2)/2} (t^{q} {\rm E}_{B_t}[\tilde{h}]) \; . $$
The {\bf strong Huygens principle} is that the solution $u(x,t)$ in odd dimensions
only depends on information of $g,h$ on the wave front $W_t$ and not the entire ball. 
The {\bf weak Huygens principle} is that the solution $u(x,t)$ in even dimensions
only depends on information of $g,h$ in the wave ball $B_t$, not outside. There is
{\bf finite speed of propagation}. 

\section*{Appendix: Polarization}

\paragraph{}
Given $n$ variables $x_1, \dots, x_n$ in an arbitrary commutative ring $\mathcal{R}$ with $1$. 
Define $S=\{-1,1\}^n$, the set of all $\pm 1$ strings of length $n$. 
The {\bf polarization identity}  is 

\begin{lemma}
$n! \prod_{i=1}^n x_i = |S|^{-1} \sum_{s \in S} (\prod_j s_j) (\sum_i s_i x_i)^n  \; .$
\end{lemma}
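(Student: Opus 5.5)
We expand the right-hand side and compare coefficients. By the multinomial theorem,
$$ \Big(\sum_i s_i x_i\Big)^n = \sum_{|\alpha|=n} \binom{n}{\alpha} \prod_i s_i^{\alpha_i} x_i^{\alpha_i} , $$
where $\alpha=(\alpha_1,\dots,\alpha_n)$ runs over multi-indices of total degree $n$ and $\binom{n}{\alpha}=n!/\prod_i \alpha_i!$. Multiplying by $\prod_j s_j$ turns the sign attached to the monomial $x^\alpha$ into $\prod_i s_i^{\alpha_i+1}$. Summing over $s\in S=\{-1,1\}^n$, the sum factorizes coordinatewise:
$$ \sum_{s\in S}\prod_i s_i^{\alpha_i+1}=\prod_{i=1}^n \Big(\sum_{s_i=\pm1} s_i^{\alpha_i+1}\Big) . $$
Each factor equals $2$ if $\alpha_i$ is odd and $0$ if $\alpha_i$ is even.

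A monomial therefore survives only if every $\alpha_i$ is odd, so every $\alpha_i\ge 1$. Together with $\sum_i\alpha_i=n$, this forces $\alpha=(1,\dots,1)$. For that multi-index the multinomial coefficient is $n!$ and the sign sum is $2^n=|S|$. The right-hand side therefore equals $|S|^{-1}\cdot 2^n\cdot n!\,\prod_i x_i = n!\prod_i x_i$, which is the claim.

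The argument uses only commutativity, since the multinomial expansion needs it. The one point requiring care is division by $|S|=2^n$ in an arbitrary commutative ring with $1$. Our computation avoids it: the identity really proven is $2^n n!\prod_i x_i=\sum_{s\in S}(\prod_j s_j)(\sum_i s_ix_i)^n$, with integer coefficients. This form holds in every commutative ring. The version with $|S|^{-1}$ is meaningful exactly when $2$ is invertible, for instance over $\mathbb{R}$, which is the case used in the Strong Huygens proof.

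For that application, the monomial $\prod_i x_i^{n_i}$ is obtained by specializing the variables. We apply the lemma to $N=\sum_j n_j$ variables and set $n_i$ of them equal to $x_i$. The resulting formula uses sums $\sum_i c_i x_i$ with integer coefficients $c_i$ obtained by summing signs; these are not only $\pm1$ sums. Since the identity is purely algebraic, no step should be a real obstacle; the only subtle point is the invertibility of $2^n$, handled above.
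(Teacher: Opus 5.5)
Your proof is correct, and it finds the constant by a different route than the paper.

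The paper asserts by a symmetry/averaging argument that the signed sum must equal $C\prod_i x_i$. It then determines $C$ by setting all $x_i=1$. This reduces to the identity $\sum_{k=0}^n(-1)^k\binom{n}{k}(n-2k)^n=2^n n!$, which the paper proves by expanding $(n-2k)^n$ binomially and using the finite-difference fact that $\sum_k(-1)^k\binom{n}{k}k^j$ vanishes for $j<n$ and equals $(-1)^n n!$ for $j=n$.

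Your coordinatewise factorization $\sum_{s}\prod_i s_i^{\alpha_i+1}=\prod_i\bigl(1+(-1)^{\alpha_i+1}\bigr)$ spells out the vanishing step that the paper only sketches. Because you keep the full multinomial expansion, you read off the surviving coefficient $n!\cdot 2^n$ directly, and no combinatorial identity is needed. In fact, specializing your result at $x_i=1$ proves the paper's identity as a corollary. The paper's route is quicker to state conceptually; yours is fully self-contained.

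Two of your side remarks are also correct. First, the identity that holds in a general commutative ring is $2^n n!\prod_i x_i=\sum_{s\in S}(\prod_j s_j)(\sum_i s_ix_i)^n$. The lemma as stated divides by $|S|$ while claiming validity in any ring, so your version is a genuine sharpening. Second, repeated variables produce linear forms with integer coefficients, not merely $\pm1$ coefficients, and this matters. For example, $x^2y$ is not in the span of the cubes $(\pm x\pm y)^3$. So the phrase ``$s_i\in\{-1,1\}$'' in the proof of the Strong Huygens theorem needs exactly the adjustment you describe.
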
 

\paragraph{}
It shows that every element in the polynomial ring $\mathcal{R}[x_1, \dots, x_n]$ 
can be written as a linear combination of powers of linear functions. 
This can then applied also to cases, where some $x_i$ are the same, allowing so to 
write any monomial $x_1^{m_1} x_2^{m_2} \cdots x_k^{m_k}$ as a linear combination of
powers $(\sum_{i=1}^k a_i x_i)^m$ of linear functions, where 
$m=m_1 +m_2 + \cdots + m_k$ is the degree of the monomial.

\paragraph{}
The best known case of the polarization identity is the {\bf parallelogram law}.
It writes $xy$ as $[(x+y)^2-(x-y)^2 -(-y+x)^2 + (-x-y)^2]/8$ where $8=2!*2^2$.
The parallelogram law is used to show that in any Hilbert space, 
we only need to know {\bf lengths} to get {\bf angles} as the {\bf dot product} 
in a Hilbert space is recoverable from {\bf norms}. It can be
used in statistics as ${\rm Cov}[X,Y]= ({\rm Var}[X+Y] - {\rm Var}[X-Y])/4$. 
An other example is
$xyz = \frac{1}{3! 2^3} \sum_{s} s_1 s_2 s_3 (s_1 x+s_2 y+s_3 z)^3$. It can
also cover cases like $x^2 y$ and see it as a linear combination of cubic powers 
$(ax+by)^3$ of linear functions in $x$ and $y$. 

\paragraph{}
The proof of the polarization property is done by averaging over the set $S$ of all possible 
$\pm 1$ vectors $s$. By symmetry, it is a multiple of a monomial. The symmetrization renders 
all signs the same, so that it must be $C \prod_{i=1}^n x_i$ for some constant C. 
To find the constant, plug in $x_i=1$ and compute $\sum_{s \in S} (\prod_j s_j) (n-2k)^n$. 
It reduces to show that $R=\frac{1}{n!} \sum_{k=0}^n \binom{n}{k} (-1)^k (n-2k)^n = 2^n$.


\paragraph{}
Plugging in $(n-2k)^n=\sum_{j=0}^n \binom{n}{j}n^{\,n-j}(-2k)^j$ gives
$$ R=\frac1{n!}\sum_{j=0}^n \binom{n}{j}n^{\,n-j}(-2)^j \sum_{k=0}^n (-1)^k\binom{n}{k}k^j  \; . $$
The finite-difference identity show that $\sum_{k=0}^n (-1)^k \binom{n}{k} k^j=0$ is only nonzero 
for $j=n$, where it is $(-1)^n n!$. 

\section*{Appendix: computer algebra related to Theorem 1}

\paragraph{}
We list now some computer algebra that we had used to find the partial 
differential equation.
\footnote{It was pretty much trial and error, first mostly done by hand, 
going through a few failures at first.}
Since Mathematica has built in $D$ as the derivative operator, we use $d$ for the 
Dirac matrix $D$. Note that all the verification of Theorem 1 works in the Abelian operator
algebra generated by the operator $D$, so that $D$ can just be treated as a
variable. The two computations are almost identical. In the 
first case, we look at $u(t,x) = t \phi_{q+2}(tD) Y$ with $Y=df$, 
and verify, that it satisfies $u_{tt} + (q-1)[u_t/t-u_{tt}/t^2]= -D^2 u$. 

\begin{tiny} 
\lstset{language=Mathematica} \lstset{frameround=fttt}
\begin{lstlisting}[frame=single]
q=5; g=First[f[r]/.DSolve[{f''[r]+(q+1)f'[r]/r+f[r]==0,f[0]==1,f'[0]==0},f[r],r]];
phi=g/.r->d t;  u=t phi d f;    {Limit[u, t -> 0],   Limit[D[u,t],t->0]}
FullSimplify[D[u,{t,2}]+(q-1)D[u,t]/t-(q-1)u/t^2 ==-d^2*u] 
\end{lstlisting}
\end{tiny}

\paragraph{}
In the second case, we look at $u(t,x) = \phi_{q}(t D) Y$ with $Y=df$. 
Note that now, there is no $t$ in front
of the solution $u(t,x)$ and that instead of $\phi_{q+2}$ we have now $\phi_q$. 
The curve $u(t,x)$ satisfies $u_{tt} + (q-1)[u_t/t]= -D^2 u$ with initial 
conditions $u(0,x) = df$ and $u_t(0,x)=0$.

\begin{tiny} 
\lstset{language=Mathematica} \lstset{frameround=fttt}
\begin{lstlisting}[frame=single]
q=7; g=First[f[r]/.DSolve[{f''[r]+(q-1)f'[r]/r+f[r]==0,f[0]==1,f'[0]==0},f[r],r]];
phi=g/.r->d t;  u=  phi d f;    {Limit[u, t -> 0],   Limit[D[u,t],t->0]}
FullSimplify[D[u,{t,2}]+(q-1)D[u,t]/t ==-d^2*u]
\end{lstlisting}
\end{tiny}

\section*{Appendix: computer algebra for sphere averaging}

\paragraph{}
As for the sphere average formulas, this also started first by experimenting by trial
and error during the winter of 2010/2011. We noticed then experimentally 
that Taylor coefficients of flux are related to Bessel functions but could not prove this yet.
The sphere and ball average formulas due to Ovall \cite{Ovall2014} 
(which in the case $d=3$ are known as Pizzetti type formulas) made things
clear in the winter of 2025/2026. 
What the code does in dimension q=1,2,3 is to take a random polynomial 
$(q-1)$-form and integrate the divergence over a ball of radius $r$, then 
make the Taylor expansion in $r$, and then  notice
that this is related to coefficients appearing in Bessel functions. 

\begin{tiny}
\lstset{language=Mathematica} \lstset{frameround=fttt}
\begin{lstlisting}[frame=single]
q=1; R:=Random[Integer,5]; L[f_]:=D[f,{x,2}]; F=Sum[R x^n,{n,0,9}];
div=D[F,x];    X=Integrate[ div /. x->r,{r,-s,s}];
c=Simplify[CoefficientList[Series[X/(2s),{s,0,12}],s]];
c0=c[[1]]; c1=c[[3]]; c2=c[[5]]; c3=c[[7]]; c4=c[[9]];
a0=div /. x->0 /. y->0;             a1=L[div] /. x->0 /. y->0;
a2=L[L[div]] /. x->0 /. y->0;       a3=L[L[L[div]]] /. x->0 /. y->0;
a4=L[L[L[L[div]]]] /. x->0 /. y->0;
B[q_,m_]:=1/Product[-(-2j)(q+2j),{j,1,m}];
{c0,c1,c2,c3,c4}-{a0 B[1,0],a1 B[1,1],a2 B[1,2],a3 B[1,3],a4 B[1,4]}


q=2; R:=Random[Integer,5]; L[f_]:=D[f,{x,2}] + D[f,{y,2}];
F={P,Q}={Sum[R x^n y^m,{n,0,9},{m,0,9}],Sum[R x^n y^m,{n,0,9},{m,0,9}]};
div=D[P,x]+D[Q,y]; 
X=Integrate[ r div /. x->r Cos[t] /. y->r Sin[t],{t,0,2Pi},{r,0,s}];
c=Simplify[CoefficientList[Series[X/(s^2 Pi),{s,0,12}],s]];
c0=c[[1]]; c1=c[[3]]; c2=c[[5]]; c3=c[[7]]; c4=c[[9]];
a0=div /. x->0 /. y->0;             a1=L[div] /. x->0 /. y->0;
a2=L[L[div]] /. x->0 /. y->0;       a3=L[L[L[div]]] /. x->0 /. y->0;
a4=L[L[L[L[div]]]] /. x->0 /. y->0;
B[q_,m_]:=1/Product[-(-2j)(q+2j),{j,1,m}];
{c0,c1,c2,c3,c4}-{a0 B[2,0],a1 B[2,1],a2 B[2,2],a3 B[2,3],a4 B[2,4]}


q=3; RR:=Random[Integer,4]; RRR:=Sum[RR x^n y^m z^k,{n,0,6},{m,0,6},{k,0,6}];
L[f_]:=D[f,{x,2}] + D[f,{y,2}] + D[f,{z,2}]; F={P,Q,R}={RRR,RRR,RRR};
div = D[P, x] + D[Q, y] + D[R, z];
X=Integrate[r^2*Sin[s] div/.x->r*Sin[s]*Cos[t]/.y->r Sin[s] Sin[t]/.z->r Cos[s],
  {t,0,2Pi},{s,0,Pi},{r,0,w}];
c=Simplify[CoefficientList[Series[X*3/(4Pi w^3),{w,0,12}],w]];
c0=c[[1]]; c1=c[[3]]; c2=c[[5]]; c3=c[[7]]; c4=c[[9]];
a0=div /. x->0 /. y->0 /. z->0;             a1=L[div] /. x->0 /. y->0 /. z->0;
a2=L[L[div]] /. x->0 /. y->0 /. z->0;       a3=L[L[L[div]]] /. x->0 /. y->0 /. z->0;
a4=L[L[L[L[div]]]] /. x->0 /. y->0 /. z->0;
B[q_,m_]:=1/Product[-(-2j)(q+2j),{j,1,m}];
{c0,c1,c2,c3,c4} - {a0 B[3,0],a1 B[3,1],a2 B[3,2],a3 B[3,3],a4 B[3,4]}
\end{lstlisting}
\end{tiny}

\section*{Appendix: illustrations}

\paragraph{}
We made our first experiments in this project by computing line integrals 
on circles on round 2-spheres. This was in the context of teaching multi-variable
calculus and exploring new problems for exams. We switch to multi-variable calculus
language: take an arbitrary vector field 
$F=[P,Q,R]$ and compute the line integrals $\int_C F \; dr$ along geodesic 
spheres C of geodesic radius $r$ on the 2-sphere. Then 
integrate the result over $M$, which for polynomial $F$ is an exercise in 
double integrals, alas a bit too complicated in general to do by hand. 
We noticed first experimentally that the result was zero, then explained it 
using cancellation. 

\begin{figure}[!htpb]
\scalebox{0.45}{\includegraphics{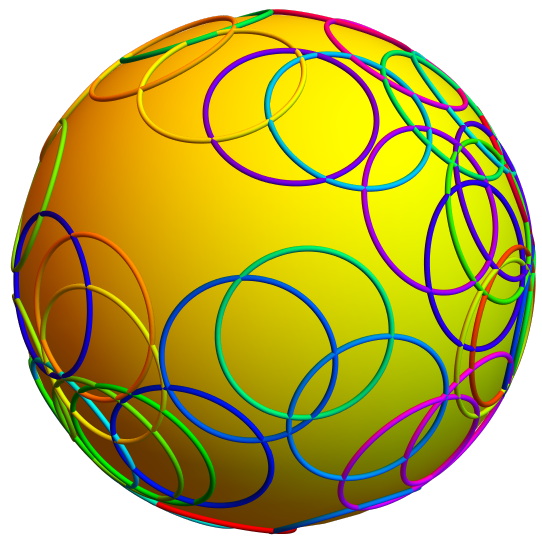}}
\label{Sphere}
\caption{
A non-local exterior derivative of a $1$-form $f$ on a 2-manifold evaluates 
a line integral of $f$ along a wave front. By Green's theorem applied to a
coordinate patch, it is an integral of curl.
}
\end{figure}

\paragraph{}
In the Riemannian geometry case the metric for which the metric lacks symmetry, 
the computation of the wave fronts requires to solve the geodesic 
differential equations $\ddot{x} + \Gamma_{ij}^k \dot{x}^i \dot{x}^j = 0$. 
The length of the wave front $W_t(p)$ starting at $p \in M$ is 
$$  |W_t(p)| = \int_0^{2\pi} |J(t,\theta)| \; d\theta \; , $$
where $J(t,\theta)$ is the {\bf Jacobi field} along the geodesic $x(t)$ 
starting at $p$ in direction $\theta$. Each $J(t,\theta)$ satisfies the Jacobi
differential equations
$$ J_{tt}(t,\theta)+K(x(t)) J(t,\theta)=0, J(0,\theta)=0,J_t(0,\theta)=1 \;, $$
where $K(x(t))$ is the curvature. See \cite{BergerPanorama}. In regions of 
negative curvature, the wave front length expands fast.

\begin{figure}[!htpb]
\scalebox{0.55}{\includegraphics{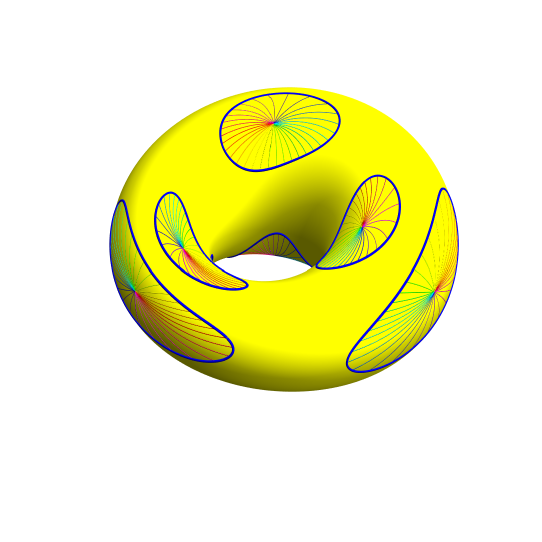}}
\label{Torus}
\caption{
On a general Riemannian 2-manifold we can evaluate a line integral along
a wave front $W_t(p)$ and get so a notion of a {\bf discrete derivative} for $1$-forms. 
By Green's theorem, it is a double integral over the ball $B_t(p) = \{ x \in M, d(x,p) = t \}$
at least if $t$ is smaller than the radius of injectivity. 
}
\end{figure}

\paragraph{} 
Let $M$ be a compact 2-manifold with boundary. Given a 1-form $f$, define the line integral
along $C=W_t(p)$ with $p$ at the boundary 
to be $\int_C d_r f$, a line integral along a half circle.
The line integral part is supported in a neighborhood of the boundary.

\paragraph{}
Symmetry shows that the integral $\int_M d_t f = 0$ for any $(q-1)$ form $f$:
every point $p$ and for almost all initial direction $\theta$ has for a given
$t$ a wave front point $x=W_t(p,\theta)$ a unique geodesic. Continue the geodesic for
an other time interval $t$ to get a dual point $p'$ and new direction 
$\theta'$ in the coordinate system at $p'$ that reaches $x$ in time $t$. 
The points $p$ and $p'$ both have distance $t$ to $x$. 
The line integral contributions of $W_t(p,\theta)$ and $W_t(p',\theta')$ 
cancel. 

\begin{figure}[!htpb]
\scalebox{0.3}{\includegraphics{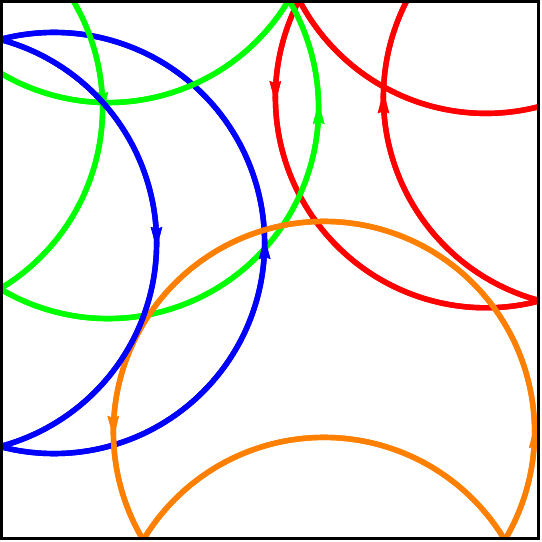}}
\label{Dirac}
\caption{
In a manifold with boundary, the wave front are reflected at the boundary. 
We still can show that the total integral of $df$ is zero. }
\end{figure}

\section*{Appendix: curvature via wave fronts}

\paragraph{}
We originally slithered into this topic in the context of {\bf Puiseux type formulas}
for curvatures in the discrete \cite{elemente11}. The point there had been 
{\bf not to take limits} but to use second order derivative notions. 

\paragraph{}
The starting point was the {\bf Bertrand-Diquest-Puiseux formula}
$K = \lim_{r \to 0} \frac{3 (2\pi r - |W_r|)}{r^3 \pi}$. 
It motivates to define 
$$ K_h(p) = \frac{2 |W_{h}| - |W_{2h}|}{2 \pi h^3} \; . $$
{\bf without taking limits} on any smooth surface $M$. 
We called this the {\bf R2-D2 formula} because the expression compares the length
of wave fronts for two radii in dimension two. 

\paragraph{}
For example, for a sphere of radius $R$, where $r = R \phi$ and
$|S_r| = 2\pi R \sin(r/R)$ the classical Puiseux formula gives $2(2\pi R \sin(r/R) - 2\pi r)$.
The R2-D2 formula gives
$4 \pi R \sin(r/R) - 2 \pi R \sin(2r/R) = 2 \pi r^3/R^2 - 2\pi r^5/(4 R^4) + ...$ so that $K = 1/R^2$.
Similarly, for a hyperbolic plane, where $|W_r| = 2 \pi R \sinh(r/R)$, we get $K = -1/R^2$.

\paragraph{}
At the boundary of a region, the R2-D2 formula is
$$ K = \lim_{r \to 0} \frac{ 2 |W_r| - |W_{2r}| } {2 \pi r^2} \; . $$
It is enough to verify this for a circular curve $C$ bounding a disc of radius $R$, where
$|W_r| = 2 \pi r \arccos(r/(2R)) = 2 \pi K r^2 + 7 \pi K^3 r^4/12 + ...$.

\paragraph{}
While the Puiseux formula refers to a flat situation, where the circle
has circumference $2\pi r$, the R2-D2 formula does refer to the
"flat" case. Flatness in this space emerged as the property that the length of 
a circular wave front grows linearly in time. Again, like here, the motivation for such 
definitions had been to have bounded curvature notions in situations which are not smooth, 
like polyhedra or fractal regions or even to push it to more general metric spaces. 
In general, it is necessary to adjust the constants to get Gauss-Bonnet. 

\begin{figure}[!htpb]
\scalebox{0.35}{\includegraphics{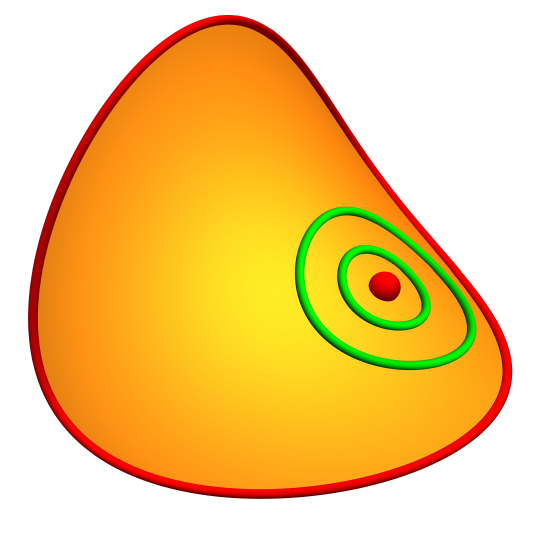}}
\scalebox{0.35}{\includegraphics{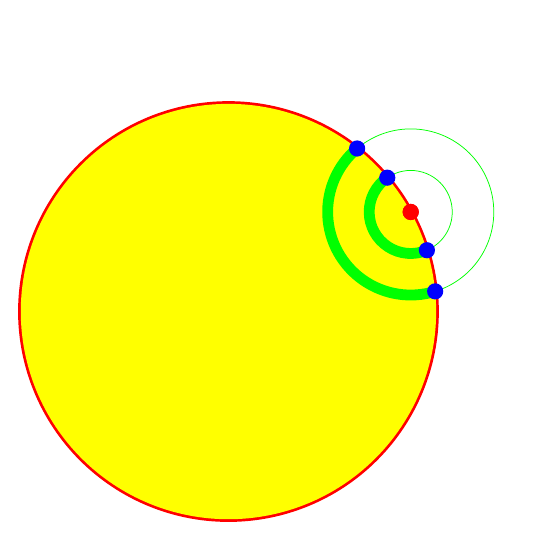}}
\label{Puiseux}
\caption{
R2-D2 formula for curvature for 2 manifolds in interior or boundary.
}
\end{figure}

\begin{figure}[!htpb]
\scalebox{0.35}{\includegraphics{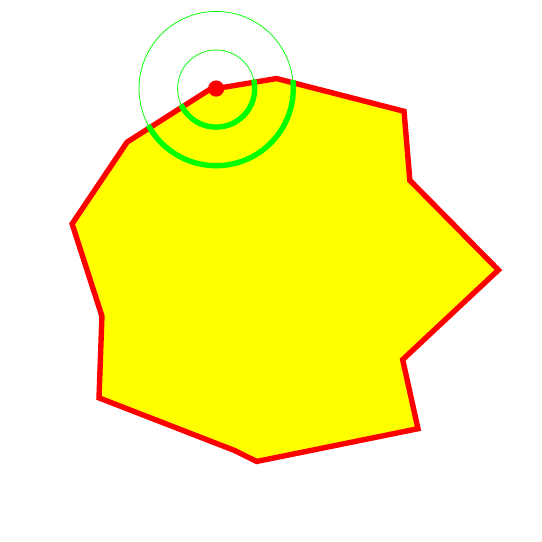}}
\scalebox{0.35}{\includegraphics{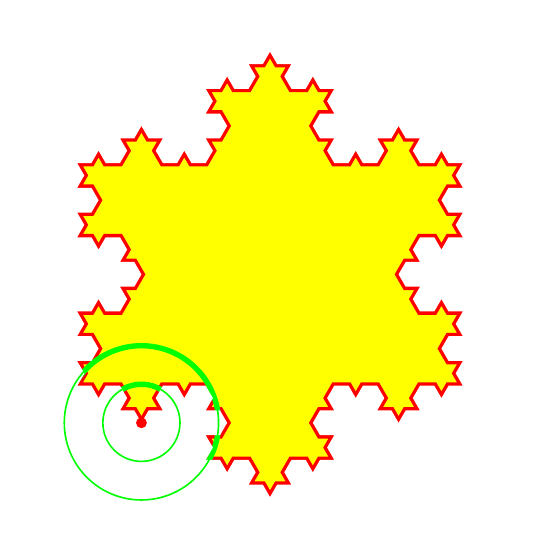}}
\label{Puiseux}
\caption{
R2-D2 formula at boundary of polygon or Koch region. 
}
\end{figure}

\bibliographystyle{plain}

\end{document}